\documentclass{amsart}
\usepackage[dvips,final]{graphics}
\usepackage{arydshln}
\usepackage[makeroom]{cancel}
\usepackage[all]{xy}
\usepackage{url}
\usepackage{multirow, blkarray}
\usepackage{booktabs}
\usepackage{textcomp}
 \usepackage[final]{epsfig}
 \usepackage{color}
\usepackage[T1]{fontenc}      
\usepackage[english,french]{babel}
\usepackage[utf8]{inputenc}
\usepackage{blindtext}

\usepackage{amsfonts, amscd, array, mathdots, epigraph, ulem}
\usepackage{amsmath}
\usepackage{amssymb}
\usepackage{amsthm}
\usepackage{mathrsfs}
\usepackage{stmaryrd}

\usepackage[pagewise]{lineno}

\vfuzz2pt 
\hfuzz2pt 
\setlength{\textwidth}{16truecm}
\setlength{\hoffset}{-1.5truecm}

\begin{document}


\newtheorem{thm}{Théorème}[section]
\newtheorem{defn}[thm]{Définition}
\newtheorem{prop}[thm]{Proposition}
\newtheorem*{rem}{Remarque}
\newtheorem{ex}[thm]{Exemple}
\newtheorem{lem}[thm]{Lemme}


\title{Quelques éléments de combinatoire des matrices de $SL_{2}(\mathbb{Z})$}

\author{Flavien Mabilat}

\date{}

\keywords{modular group; polygon; quiddity}

\address{
Flavien Mabilat,
Laboratoire de Mathématiques 
U.F.R. Sciences Exactes et Naturelles 
Moulin de la Housse - BP 1039 
51687 Reims cedex 2,
France}
\email{flavien.mabilat@univ-reims.fr}

\maketitle

\selectlanguage{english}
\begin{abstract}
A Theorem of V.Ovsienko characterizes sequences of positive integers
$(a_{1},a_{2},\ldots,a_{n})$ such that
the $(2\times2)$-matrix $\begin{pmatrix}
   a_{n} & -1 \\
    1    & 0 
   \end{pmatrix}\cdots
   \begin{pmatrix}
   a_{1} & -1 \\
    1    & 0 
    \end{pmatrix}$ is equal to $\pm Id$. In this paper, we study this equation when we replace $\pm Id$ by $\pm M$. In particular, we give a combinatorial description of the solutions of this equation in terms of dissections of convex polygons in the cases $M=\begin{pmatrix}
    0 & -1 \\
    1    & 0 
    \end{pmatrix}$ and $M=\begin{pmatrix}
    1 & 1 \\
    0 & 1 
    \end{pmatrix}$.
\end{abstract}

\selectlanguage{french}
\begin{abstract}
Un théorème de V.Ovsienko caractérise les séquences d'entiers strictement positifs
$(a_{1},\ldots,a_{n})$ vérifiant $\begin{pmatrix}
   a_{n} & -1 \\
    1    & 0 
   \end{pmatrix}\cdots
   \begin{pmatrix}
   a_{1} & -1 \\
    1    & 0 
    \end{pmatrix}=\pm Id$. Dans cette note, on étudie cette équation lorsque l'on remplace $\pm Id$ par $\pm M$. On traite en détail les cas des deux générateurs classiques du groupe modulaire $M=\begin{pmatrix}
    0 & -1 \\
    1    & 0 
    \end{pmatrix}$ et $M=\begin{pmatrix}
    1 & 1 \\
    0 & 1 
    \end{pmatrix}$, en donnant notamment une description combinatoire des solutions en terme de découpages de polygones convexes.
\\
\end{abstract}

\thispagestyle{empty}

{\bf Mots clés:} groupe modulaire; polygone; quiddité
\\
\\Declarations of interest: none
\\
\begin{flushright}
\og \textit{Quand les mystères sont très malins, ils se cachent dans la lumière.} \fg
\\ Jean Giono, \textit{Ennemonde et autres caractères}
\end{flushright}

\section{Introduction}

Le groupe modulaire 
$$SL_{2}(\mathbb{Z})=
\left\{
\begin{pmatrix}
a & b \\
c & d
   \end{pmatrix}
 \;\vert\;a,b,c,d \in \mathbb{Z},\;
 ad-bc=1
\right\}$$
et son quotient par le centre, $PSL_{2}(\mathbb{Z})=SL_{2}(\mathbb{Z})/\{\pm Id\}$,
interviennent dans de nombreux domaines mathématiques tels que la théorie des fractions continues ou la géométrie hyperbolique. Ceci explique que ce groupe ait fait l'objet de nombreux travaux et que sa structure soit maintenant bien connue. L'un des éléments les plus remarquables de cette structure est la connaissance de parties génératrices à seulement deux éléments. Il existe plusieurs façons de choisir de tels éléments et on considérera ici les deux générateurs suivants (pour une preuve de ce résultat classique on pourra consulter par exemple \cite{A}): \[T=\begin{pmatrix}
 1 & 1 \\[2pt]
    0    & 1 
   \end{pmatrix}, S=\begin{pmatrix}
   0 & -1 \\[2pt]
    1    & 0 
   \end{pmatrix}.
 \] 
\\Ce choix implique que pour tout élément $A$ de $SL_{2}(\mathbb{Z})$ il existe un entier strictement positif $m$ et des entiers relatifs $b_{1},\ldots,b_{m},d_{1},\ldots,d_{m}$ tels que \[A=T^{b_{m}}S^{d_{m}}T^{b_{m-1}}S^{d_{m-1}}\cdots T^{b_{1}}S^{d_{1}}.\] Comme $S^{2}=-Id$, il existe $\epsilon$ appartenant à $\{-1,1\}$, un entier strictement positif $l$ et des entiers relatifs $c_{1},\ldots,c_{l}$ tels que \[A= \epsilon T^{c_{l}}ST^{c_{l-1}}S\cdots T^{c_{1}}S=
\epsilon  \begin{pmatrix}
   c_{l} & -1 \\[4pt]
    1    & 0 
   \end{pmatrix}
\begin{pmatrix}
   c_{l-1} & -1 \\[4pt]
    1    & 0 
   \end{pmatrix}
   \cdots
   \begin{pmatrix}
   c_{1} & -1 \\[4pt]
    1    & 0 
    \end{pmatrix}.\]
Ici on a supposé $d_{1}$ impair. Si $d_{1}$ est pair alors on peut se ramener au cas précédent en remplaçant $T^{b_{1}}$ par $T^{b_{1}}=(-T^{2}STSTS)^{b_{1}}$, si $b_{1}$ est positif, ou, par $T^{b_{1}}=(-TSTST^{2}STS)^{\left|b_{1}\right|}$, si $b_{1}$ est négatif. On utilisera la notation $M_{n}(a_{1},\ldots,a_{n})$ pour désigner la matrice $\begin{pmatrix}
   a_{n} & -1 \\[4pt]
    1    & 0 
   \end{pmatrix}
\begin{pmatrix}
   a_{n-1} & -1 \\[4pt]
    1    & 0 
   \end{pmatrix}
   \cdots
   \begin{pmatrix}
   a_{1} & -1 \\[4pt]
    1    & 0 
    \end{pmatrix}$. De plus, on a \[S=-M_{5}(1,1,2,1,1),~T^{-1}=-M_{4}(1,2,1,1),~T=-M_{3}(1,1,2)~{\rm et}~-Id=M_{3}(1,1,1).\] Donc, toute matrice de $SL_{2}(\mathbb{Z})$ peut s'écrire sous la forme \[A=\begin{pmatrix}
   a_{n} & -1 \\[4pt]
    1    & 0 
   \end{pmatrix}
\begin{pmatrix}
   a_{n-1} & -1 \\[4pt]
    1    & 0 
   \end{pmatrix}
   \cdots
   \begin{pmatrix}
   a_{1} & -1 \\[4pt]
    1    & 0 
    \end{pmatrix}=T^{a_{n}}ST^{a_{n-1}}S\cdots T^{a_{1}}S,\] avec $a_{1},\ldots,a_{n}$ des entiers strictement positifs. Cette écriture n'est malheureusement pas unique. Par exemple, \[-Id=M_{3}(1,1,1)=M_{4}(1,2,1,2).\] Cependant, on peut essayer de trouver toutes les écritures d'une matrice sous la forme $M_{n}(a_{1},\ldots,a_{n})$. De plus, la présence d'entiers strictement positifs nous invite à chercher des descriptions combinatoires des éléments du groupe modulaire en exploitant l'idée que de tels nombres comptent des objets (en particulier géométriques). De telles descriptions permettent également de connaître plus facilement les solutions. En particulier, V.Ovsienko (voir \cite{O}) a donné une description combinatoire, en terme de découpages de polygones, des $n$-uplets d'entiers strictement positifs solutions de l'équation suivante: \begin{equation}
\tag{$E$}
M_{n}(a_1,\ldots,a_n)=\pm Id.
\end{equation} Ce résultat généralisait déjà lui-même un résultat antérieur dû à Conway et Coxeter (voir \cite{CoCo}) datant de 1973 (voir la section \ref{b} pour plus de détails). Notre objectif ici est d'étudier, pour $M \in SL_{2}(\mathbb{Z})$, la généralisation suivante de l'équation $(E)$:
\begin{equation}
\tag{$E_{M}$}
M_{n}(a_1,\ldots,a_n)=\pm M.
\end{equation} en caractérisant, si possible, les matrices $M$ pour lesquelles certaines propriétés de l'équation $(E)$ (détaillées dans les sections suivantes) sont encore vraies. On étudiera notamment en détail le cas des générateurs, c'est-à-dire les cas $M=S$ et $M=T$, en obtenant notamment une description combinatoire des solutions utilisant des découpages de polygones.

\section{Résultats principaux}\label{PDSec}
\label{a}

On s’intéresse ici, pour $M \in SL_{2}(\mathbb{Z})$, à certaines propriétés de l'équation \begin{equation}
\tag{$E_{M}$}
M_{n}(a_1,\ldots,a_n)=\pm M.
\end{equation} Notons qu'avec cette notation $(E)$ devient ($E_{Id}$). 
\\
\\On se propose ici d'étudier les cas $M=S$ et $M=T$. Pour l'équation $(E_{S})$, on introduit l'objet combinatoire suivant :

\begin{defn}
\label{22}

Soient $n \in \mathbb{N}^{*}$, $n \geq 5$ et $P$ un polygone convexe à $n+1$ sommets. Une 3$d$-dissection échancrée de $P$ est une décomposition de $P$ en sous-polygones par des diagonales ne se croisant qu'aux sommets de $P$ et vérifiant les conditions suivantes : 
\begin{itemize}
\item un seul des sous-polygones est un quadrilatère;
\item tous les autres sous-polygones intervenant dans la décomposition possèdent un nombre de sommets égal à un multiple de $3$;
\item le quadrilatère a exactement deux de ses côtés qui sont des côtés de $P$ et ces deux côtés possèdent un sommet en commun. 
\end{itemize}
\noindent On numérote ce sommet $0$ puis on numérote les autres sommets dans le sens trigonométrique. Pour chaque $i \in \llbracket 1~;~ n \rrbracket$, on note $a_{i}$ le nombre de sous-polygones possédant un nombre de sommets égal à un multiple de $3$ utilisant le sommet $i$. $(a_{1},\ldots,a_{n})$ est la quiddité de la 3$d$-dissection échancrée de $P$.

\end{defn}

Voici deux exemples de 3$d$-dissection échancrée avec leur quiddité:

$$
\shorthandoff{; :!?}
\xymatrix @!0 @R=0.45cm @C=0.8cm
{
&\bullet\ar@{-}[ld]\ar@{-}[rd]&
\\
1\ar@{-}[dd]\ar@{-}[rddd]&&1\ar@{-}[dd]\ar@{-}[lddd]
\\
\\
1\ar@{-}[]&&1\ar@{-}[]
\\
&2\ar@{-}[lu]\ar@{-}[ru]&
}
\qquad
\xymatrix @!0 @R=0.45cm @C=0.45cm
{
&&&1\ar@{-}[rrd]\ar@{-}[lld]
\\
&2\ar@{-}[ldd]\ar@{-}[rrrrrdd]\ar@{-}[rrrr]&&&& 2\ar@{-}[rdd]&\\
\\
\bullet\ar@{-}[rdd]&&&&&& 2\ar@{-}[ldd]\\
\\
&1\ar@{-}[rrrr]\ar@{-}[rrrrruu]&&&&1
}$$

\noindent Remarquons que deux 3$d$-dissections échancrées d'un même polygone convexe peuvent avoir la même quiddité. On donne ci-dessous un exemple inspiré de la remarque 3.9 de \cite{MO} :
 
$$
\shorthandoff{; :!?}
\xymatrix @!0 @R=0.32cm @C=0.45cm
 {
&&1\ar@{-}[rrrrddddd]\ar@{-}[ld]\ar@{-}[rr]&&\bullet\ar@{-}[rd]&
\\
&2\ar@{-}[ldd]&&&& 1\ar@{-}[rdd]\ar@{-}[rdddd]\\
\\
1\ar@{-}[dd]&&&&&&1\ar@{-}[dd]\\
\\
2\ar@{-}[rdd]\ar@{-}[ruuuu]&&&&&&3\ar@{-}[ldd]\ar@{-}[lllddd]\\
\\
&1\ar@{-}[rrd]&&&& 1\ar@{-}[lld]\\
&&&2&
}
\qquad
\xymatrix @!0 @R=0.32cm @C=0.45cm
 {
&&1\ar@{-}[rrrrddddd]\ar@{-}[ld]\ar@{-}[rr]&&\bullet\ar@{-}[rd]&
\\
&2\ar@{-}[ldd]\ar@{-}[rrrrrdddd]&&&& 1\ar@{-}[rdd]\ar@{-}[rdddd]\\
\\
1\ar@{-}[dd]&&&&&&1\ar@{-}[dd]\\
\\
2\ar@{-}[rdd]\ar@{-}[rrrddd]&&&&&&3\ar@{-}[ldd]\\
\\
&1\ar@{-}[rrd]\ar@{-}&&&& 1\ar@{-}[lld]\\
&&&2&
}
\qquad
$$

On a le résultat suivant :

\begin{thm}
\label{23}

Tout $n$-uplet d'entiers strictement positifs solution de $(E_{S})$ est la quiddité d'une 3$d$-dissection échancrée d'un polygone convexe à $n+1$ sommets et réciproquement.

\end{thm}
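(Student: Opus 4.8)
The plan is to establish a bijection (up to quiddity) between $n$-uplets of positive integers solving $(E_S)$ and quiddities of $3d$-dissections, by reducing to the already-known combinatorial description of solutions of $(E)=(E_{Id})$ due to Ovsienko. The key observation is the factorisation $S=-M_5(1,1,2,1,1)$ recorded in the introduction, which means $M_n(a_1,\ldots,a_n)=\pm S$ if and only if $M_n(a_1,\ldots,a_n)M_5(1,1,2,1,1)^{-1}=\mp Id$. Using the standard ``gluing'' formula for products of the matrices $M_k$ — namely that $M_n(a_1,\ldots,a_n)$ times $M_m(b_1,\ldots,b_m)$ can be rewritten, after absorbing the overlap, as a single $M_{n+m-\ast}$ with the two boundary entries added — one converts $M_n(a_1,\ldots,a_n)=\pm S$ into an equation of the form $M_{n+?}(\ldots)=\pm Id$ whose solutions are exactly the quiddities of triangulations-with-$3k$-gons of a convex polygon. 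I would first make this reduction completely explicit: show that $(a_1,\ldots,a_n)$ solves $(E_S)$ iff a suitably modified tuple $(a_1,\ldots,a_{n-1},a_n+1,1,1,2,\ldots)$ (the precise surgery coming from multiplying by $M_5(1,1,2,1,1)$ on the appropriate side and collapsing) solves $(E)$.

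Next I would translate this algebraic surgery into geometry. A solution of $(E)$ on $n+k$ entries corresponds, by Ovsienko's theorem, to a $3d$-dissection (dissection into polygons each having a multiple of $3$ vertices) of a convex $(n+k)$-gon, with $a_i$ counting the cells at vertex $i$. The extra entries $1,1,2,1,1$ coming from the $S$-factor carve out, near a distinguished vertex, a fixed local configuration; I claim that collapsing those five vertices/entries back down precisely amounts to replacing that local piece of the dissection by the ``échancrure'' — the single quadrilateral with two adjacent sides on the boundary meeting at vertex $0$ — described in Definition \ref{22}. So from a solution of $(E_S)$ one builds the $(n+5)$-or-so polygon, applies Ovsienko to get a $3d$-dissection, then excises the standardized block to obtain a $3d$-dissection échancrée of an $(n+1)$-gon whose quiddity is $(a_1,\ldots,a_n)$; conversely, given a $3d$-dissection échancrée of an $(n+1)$-gon, one replaces the quadrilateral block by the standard $1,1,2,1,1$ gadget to get a genuine $3d$-dissection of a larger polygon, hence (by the converse half of Ovsienko) a solution of $(E)$, hence a solution of $(E_S)$ with the right quiddity.

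The main technical points I expect to be delicate are: (i) verifying that the algebraic reduction is an honest equivalence, i.e. that \emph{every} solution of $(E_S)$ arises this way and the operation is reversible — here one must be careful about the $\pm$ sign (the factor $-1$ in $S=-M_5(1,1,2,1,1)$), about low-dimensional edge cases (small $n$, the hypothesis $n\ge 5$ in Definition \ref{22}), and about the fact, flagged in the excerpt, that the quiddity does not determine the dissection, so the correspondence is only ``surjective onto quiddities'' in each direction, not a literal bijection of dissections; (ii) checking that the quadrilateral's two boundary sides really must be adjacent at vertex $0$ and that this matches exactly the constraints forced by the $M_5(1,1,2,1,1)$ block — that the quadrilateral (a ``$4$-gon'', not a multiple of $3$) is forced to appear exactly once is the geometric shadow of the single $S$ on the right-hand side. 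Concretely the steps are: (1) recall the product/gluing lemma for the $M_n$ and the exact statement of Ovsienko's theorem from Section \ref{b}; (2) prove the algebraic equivalence $(E_S)\Leftrightarrow(E)$ via right-multiplication by $M_5(1,1,2,1,1)^{-1}$; (3) set up the geometric dictionary for the surgery, i.e. the local move replacing the $1,1,2,1,1$-gadget by an échancrure; (4) deduce the direct implication; (5) deduce the converse; (6) handle the sign and the small cases. I expect step (3) — pinning down the exact local combinatorics of the échancrure and proving it is the unique geometric counterpart of the $S$-factor — to be the main obstacle.
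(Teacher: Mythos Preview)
Your algebraic reduction is essentially valid: since $M_5(1,1,2,1,1)=-S=S^{-1}$, multiplying on the left gives the honest equivalence
\[
M_n(a_1,\ldots,a_n)=\pm S \;\Longleftrightarrow\; M_{n+5}(a_1,\ldots,a_n,1,1,2,1,1)=\pm Id,
\]
with no merging of boundary entries (your tentative ``$a_n+1$'' is incorrect: the product of two $M_k$'s is pure concatenation). So step~(2) can be fixed. The difficulty is exactly where you place it, in step~(3): in a $3d$-dissection of the $(n+5)$-gon with quiddity $(\ldots,1,1,2,1,1)$, those five vertices do \emph{not} sit inside a standardized local block. They force only that a single diagonal leaves the middle vertex, and the two incident faces may be arbitrarily large $3k$-gons reaching deep into the rest of the polygon. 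Excising five vertices and sewing in the échancrure can be made to work, but controlling where the fourth vertex $j$ of the quadrilateral lands (and ruling out the degenerate cases $j=2$ or $j=n-1$ where a ``diagonal'' becomes a side) requires nontrivial case analysis.

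The paper avoids all of this by reducing \emph{down} instead of up. Using $S=M_1(0)$ and the collapse identity
\[
\begin{pmatrix}a_1&-1\\1&0\end{pmatrix}\begin{pmatrix}0&-1\\1&0\end{pmatrix}\begin{pmatrix}a_n&-1\\1&0\end{pmatrix}=-\begin{pmatrix}a_1+a_n&-1\\1&0\end{pmatrix},
\]
together with the cyclic invariance of $(E_{Id})$, one gets the much tighter equivalence
\[
M_n(a_1,\ldots,a_n)=\pm S \;\Longleftrightarrow\; M_{n-1}(a_1+a_n,a_2,\ldots,a_{n-1})=\pm Id.
\]
Now Ovsienko gives a $3d$-dissection of an $(n{-}1)$-gon whose vertex~$1$ has quiddity $a_1+a_n$; one simply picks the diagonal from vertex~$1$ separating the first $a_1$ incident faces from the last $a_n$, cuts there, and inserts the quadrilateral with its apex labelled $0$. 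Conversely, deleting the quadrilateral from an échancrée and gluing the two exposed diagonals identifies vertices $1$ and $n$, yielding the $(n{-}1)$-gon with merged quiddity $a_1+a_n$. This surgery is local, reversible, and needs no case analysis --- which is what your larger-polygon route would ultimately have to reproduce (indeed, two reverse applications of operation~(b) to your $(n{+}5)$-tuple recover exactly the paper's $(n{-}1)$-tuple).
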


La démonstration de ce théorème, effectuée dans la section \ref{d}, utilise en particulier le théorème d'Ovsienko rappelé dans la section suivante (voir \cite{O}).
\\
\\Pour l'équation $(E_{T})$, on introduit l'objet combinatoire ci-dessous :

\begin{defn}
\label{24}

Soient $n \in \mathbb{N}^{*}$, $n \geq 3$ et $P$ un polygone convexe à $n+2$ sommets. Une 3$d$-dissection coiffée de $P$ est une décomposition de $P$ en sous-polygones par des diagonales ne se croisant qu'aux sommets de $P$ et vérifiant les conditions suivantes : 
\begin{itemize}
\item tous les sous-polygones intervenant dans la décomposition possèdent un nombre de sommets égal à un multiple de $3$;
\item $P$ possède un triangle extérieur (c'est-à-dire que deux de ses côtés sont des côtés de $P$) auquel on affecte le poids -1;
\item tous les autres sous-polygones reçoivent le poids 1;
\item le triangle de poids -1, noté $T$, a un côté en commun avec un triangle de poids 1, noté $T'$, et, un des côtés de $T'$ est un côté de $P$.
\end{itemize}
\noindent Le sommet appartenant uniquement à $T$ est numéroté $0$ et celui appartenant uniquement à $T$ et à $T'$ est numéroté $-1$. On numérote les autres sommets de telle façon que le sommet numéroté $n$ soit adjacent au sommet numéroté $-1$. Pour chaque $i \in \llbracket 1~;~ n \rrbracket$, on note $a_{i}$ la somme des poids des sous-polygones utilisant le sommet $i$. $(a_{1},\ldots,a_{n})$ est la quiddité de la 3$d$-dissection coiffée de $P$.

\end{defn}

Voici deux exemples de 3$d$-dissection coiffée avec leur quiddité :

$$
\shorthandoff{; :!?}
\qquad
\xymatrix @!0 @R=0.50cm @C=0.65cm
{
&&\bullet\ar@{-}[rrdd]\ar@{-}[lldd]&
\\
&&-1
\\
1\ar@{-}[rdd]\ar@{-}[rrrdd]\ar@{-}[rrrr]&&&& \bullet\ar@{-}[ldd]\\
&1&&1&
\\
&1\ar@{-}[rr]&& 2
}
\qquad
\xymatrix @!0 @R=0.40cm @C=0.5cm
{
&&1\ar@{-}[rrdd]\ar@{-}[lldd]&
\\
&& 1
\\
2\ar@{-}[ddd]\ar@{-}[rrrr]&& && 3\ar@{-}[ddd]
\\
&1
\\
&&& 1
\\
1 \ar@{-}[rrrruuu] \ar@{-}[rrrr] &&&& \bullet
\\
&& -1
\\
&&\bullet\ar@{-}[lluu]\ar@{-}[rruu]
}
$$

\noindent Remarquons que pour obtenir une 3$d$-dissection coiffée, il suffit d'ajouter un quadrilatère coupé en deux triangles sur un côté d'une 3$d$-dissection, puis, d'ajouter les poids comme demandé dans la définition. D'autre part, on constate également que deux 3$d$-dissections coiffées d'un même polygone convexe peuvent avoir la même quiddité. On donne ci-dessous un exemple, basé lui aussi sur la remarque 3.9 de \cite{MO}. Pour éviter de surcharger la figure, les poids ne sont pas indiqués. 
 
$$
\shorthandoff{; :!?}
\xymatrix @!0 @R=0.5cm @C=0.7cm
 {
&&2\ar@{-}[rrrrddddd]\ar@{-}[ld]\ar@{-}[rr]&&2\ar@{-}[rd]\ar@{-}[rrddddd]&
\\
&2\ar@{-}[ldd]&&&& \bullet\ar@{-}[rdd]\ar@{-}[rdddd]\\
\\
1\ar@{-}[dd]&&&&&&\bullet\ar@{-}[dd]\\
\\
2\ar@{-}[rdd]\ar@{-}[ruuuu]&&&&&&3\ar@{-}[ldd]\ar@{-}[lllddd]\\
\\
&1\ar@{-}[rrd]&&&& 1\ar@{-}[lld]\\
&&&2&
}
\qquad
\xymatrix @!0 @R=0.5cm @C=0.7cm
 {
&&2\ar@{-}[rrrrddddd]\ar@{-}[ld]\ar@{-}[rr]&&2\ar@{-}[rd]\ar@{-}[rrddddd]&
\\
&2\ar@{-}[ldd]\ar@{-}[rrrrrdddd]&&&& \bullet\ar@{-}[rdd]\ar@{-}[rdddd]\\
\\
1\ar@{-}[dd]&&&&&&\bullet\ar@{-}[dd]\\
\\
2\ar@{-}[rdd]\ar@{-}[rrrddd]&&&&&&3\ar@{-}[ldd]\\
\\
&1\ar@{-}[rrd]\ar@{-}&&&& 1\ar@{-}[lld]\\
&&&2&
}
\qquad
$$

\noindent On dispose du résultat suivant :

\begin{thm}
\label{25} Soit $n \geq 3$.
\\
\\i) $(a_{1},\ldots,a_{n-1},1)$ est solution de $(E_{T})$ si et seulement si $(a_{1},\ldots,a_{n-1})$ est la quiddité d'une 3$d$-dissection échancrée d'un polygone convexe à $n$ sommets.
\\
\\ii) $(a_{1},\ldots,a_{n})$ avec $a_{n} \geq 2$ est solution de $(E_{T})$ si et seulement si $(a_{1},\ldots,a_{n})$ est la quiddité d'une 3$d$-dissection coiffée d'un polygone convexe à $n+2$ sommets.

\end{thm}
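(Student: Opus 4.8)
The strategy is to reduce everything to the theorem of Ovsienko recalled in the next section (characterizing solutions of $(E)$ by $3d$-dissections) and to Theorem \ref{23} (the analogous statement for $(E_S)$), by exploiting the algebraic relations between $T$, $S$ and the matrices $M_n$ already recorded in the introduction. The key computational observation is that appending entries to a sequence corresponds to multiplying by extra factors $T^{a}S$, so one should look for identities expressing $\pm T$ as such a product applied to a shorter sequence. Concretely, since $T=-M_{3}(1,1,2)$ one expects that $M_{n}(a_{1},\ldots,a_{n-1},1)=\pm T$ is equivalent to $M_{n-1}(a_{1},\ldots,a_{n-1})$ being $\pm S$ (using $S=-M_{5}(1,1,2,1,1)$ and the relation $ST^{1}S=\ldots$), which is exactly part i); and that $M_{n}(a_{1},\ldots,a_{n})=\pm T$ with $a_{n}\geq 2$ can be rewritten, after peeling off the last factor $\begin{pmatrix} a_{n} & -1 \\ 1 & 0\end{pmatrix}$, as a solution of $(E)$ for a related sequence with one more entry, which is part ii).

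More precisely, for part i) I would first prove the purely algebraic equivalence: $M_{n}(a_{1},\ldots,a_{n-1},1)=\varepsilon T$ for some $\varepsilon\in\{\pm1\}$ if and only if $M_{n-1}(a_{1},\ldots,a_{n-1})=\varepsilon' S$ for some $\varepsilon'\in\{\pm1\}$. This follows by writing $M_{n}(a_{1},\ldots,a_{n-1},1)=\begin{pmatrix}1 & -1 \\ 1 & 0\end{pmatrix}M_{n-1}(a_{1},\ldots,a_{n-1})$ and checking that $\begin{pmatrix}1 & -1 \\ 1 & 0\end{pmatrix}^{-1}(\pm T)$ runs exactly through $\{\pm S\}$ — a one-line $2\times 2$ matrix computation. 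Then Theorem \ref{23} translates the condition ``$(a_{1},\ldots,a_{n-1})$ solves $(E_{S})$'' into ``$(a_{1},\ldots,a_{n-1})$ is the quiddity of a $3d$-dissection échancrée of a convex $((n-1)+1)=n$-gon'', which is precisely the claim. One small point to handle carefully: Theorem \ref{23} is stated for $n\geq 5$ in Definition \ref{22}, so for the small cases $n=3,4$ (where $(a_1,\ldots,a_{n-1})$ has length $2$ or $3$) I would check the statement directly against the explicit small solutions of $(E_S)$.

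For part ii), the plan is to realize a $3d$-dissection coiffée of an $(n+2)$-gon as a $3d$-dissection (in Ovsienko's sense) of an $(n+2)$-gon together with one extra triangle of weight $-1$ glued on — exactly the construction indicated in the remark before the theorem (``il suffit d'ajouter un quadrilatère coupé en deux triangles''). On the combinatorial side this gives a bijection between $3d$-dissections coiffées with quiddity $(a_1,\ldots,a_n)$, $a_n\geq 2$, and $3d$-dissections of an $(n+2)$-gon with an associated quiddity $(b_1,\ldots,b_{n+1})$ obtained by a local modification near the glued triangle (the weights $-1$ and $+1$ on the two new triangles shift the counts at the relevant vertices). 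On the algebraic side, peeling the last factor off $M_n(a_1,\ldots,a_n)=\pm T=\mp M_3(1,1,1)\cdot(\text{stuff})$ and using $-Id=M_3(1,1,1)$ should show that $(a_1,\ldots,a_n)$ solves $(E_T)$ iff $(b_1,\ldots,b_{n+1})$ solves $(E)$; then invoke Ovsienko's theorem. I expect the main obstacle to be bookkeeping: getting the vertex numbering conventions of Definition \ref{24} (the vertices $0$ and $-1$, the adjacency of $n$ to $-1$) to line up correctly with the quiddity shift under gluing, and making sure the condition $a_n\geq 2$ corresponds exactly to the glued configuration being a genuine coiffée dissection rather than collapsing to case i). The algebra and Ovsienko's theorem are then routine; the delicate part is the combinatorial dictionary between the added quadrilateral and the two extra $M$-factors.
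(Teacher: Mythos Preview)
Your plan for part i) is exactly the paper's proof: the equivalence $M_n(a_1,\ldots,a_{n-1},1)=\pm T \Leftrightarrow M_{n-1}(a_1,\ldots,a_{n-1})=\pm S$ is precisely Lemma \ref{63}, and combining it with Theorem \ref{23} finishes. Your remark on small $n$ is also on point (for $n\le 5$ the sequence $(a_1,\ldots,a_{n-1})$ has length $\le 4$, so $(E_S)$ has no solution by Lemma \ref{53} and both sides are vacuous).

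For part ii) your strategy is the right one, but the reduction you sketch is more convoluted than necessary and the polygon sizes are off. The paper's key observation (Lemma \ref{62}) is the one-line identity
\[
T^{-1}\begin{pmatrix} a_n & -1\\ 1 & 0\end{pmatrix}=\begin{pmatrix} a_n-1 & -1\\ 1 & 0\end{pmatrix},
\]
so $(a_1,\ldots,a_n)$ solves $(E_T)$ iff $(a_1,\ldots,a_{n-1},a_n-1)$ solves $(E_{Id})$; when $a_n\ge 2$ this is a sequence of \emph{length $n$} with positive entries, hence by Ovsienko the quiddity of a $3d$-dissection of a convex $n$-gon (not an $(n+2)$-gon). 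The coiffée structure on an $(n+2)$-gon is then obtained by gluing the two-triangle quadrilateral onto the edge joining vertices $1$ and $n$ of that $n$-gon, which raises the count at vertex $n$ from $a_n-1$ back to $a_n$ and creates the two new vertices $0$ and $-1$. So there is no intermediate sequence $(b_1,\ldots,b_{n+1})$ and no need to invoke $-Id=M_3(1,1,1)$: the ``two extra $M$-factors'' you are looking for collapse to the single left-multiplication by $T^{-1}$. Once you make this adjustment, your bookkeeping worries about Definition \ref{24} evaporate --- the condition $a_n\ge 2$ is exactly what guarantees $a_n-1\ge 1$, and the glued pair of triangles is precisely the $T,T'$ of the definition.
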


La démonstration de ce théorème, effectuée dans la section \ref{e}, utilise le théorème d'Ovsienko et le théorème \ref{23}.

\section{Théorèmes de Conway-Coxeter et Ovsienko}\label{PDSec}
\label{b}

L'objectif de cette section est de donner un bref aperçu des théorèmes de Conway-Coxeter et d'Ovsienko. Cela nous permettra d'énoncer les propriétés de l'équation $(E_{Id})$ et nous fournira des arguments qui seront réutilisés dans les sections à venir. 
\\
\\Le théorème de Conway-Coxeter relie certaines solutions de $(E_{Id})$ aux triangulations de polygones convexes (la formulation originale du résultat traitait des frises de Coxeter, voir ~\cite{CoCo,Cox} et \cite{BR} pour le lien avec $(E_{Id})$). On considère une triangulation d'un polygone convexe à $n$ sommets. Suivant \cite{CoCo}, on appelle quiddité associée à la triangulation la séquence $(a_{1},\ldots,a_{n})$
où $a_{i}$ est égal au nombre de triangles utilisant le sommet $i$. On a le résultat suivant:

\begin{thm}[Conway-Coxeter,~\cite{MO} Théorème 3.3]
\label{31}

(i) La quiddité associée à la triangulation d'un polygone convexe à $n$ sommets est un $n$-uplet d'entiers strictement positifs solution de $(E_{Id})$.

(ii) Tout $n$-uplet d'entiers strictement positifs $(a_{1},\ldots,a_{n})$ solution de $(E_{Id})$, satisfaisant la condition
$a_1+a_2+\cdots+a_n=3n-6$,
est la quiddité associée à la triangulation d'un polygone convexe à $n$ sommets.

\end{thm}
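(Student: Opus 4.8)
The plan is to prove both directions by induction on $n$, with two engines. The combinatorial engine is the notion of an \emph{ear} of a triangulation (a triangle two of whose sides are sides of the polygon); cutting off an ear decreases the number of vertices by one and changes the quiddity by deleting the entry $1$ at the ear tip and decreasing by $1$ the two neighbouring entries, while gluing a triangle onto an edge does the reverse. The arithmetic engine is the local identity
\[
\begin{pmatrix} a & -1 \\ 1 & 0 \end{pmatrix}\begin{pmatrix} 1 & -1 \\ 1 & 0 \end{pmatrix}\begin{pmatrix} b & -1 \\ 1 & 0 \end{pmatrix} = \begin{pmatrix} a-1 & -1 \\ 1 & 0 \end{pmatrix}\begin{pmatrix} b-1 & -1 \\ 1 & 0 \end{pmatrix},
\]
together with the remark that the condition $M_n(a_1,\ldots,a_n)=\pm Id$ is invariant under cyclic permutation of $(a_1,\ldots,a_n)$, since such a permutation conjugates $M_n$ by a matrix of the form $\begin{pmatrix} a & -1 \\ 1 & 0\end{pmatrix}$. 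The identity lets one replace, inside a product $M_n$, a pattern $(a,1,b)$ at three consecutive cyclic positions by the pattern $(a-1,b-1)$, shortening the tuple by one — this is precisely cutting off an ear at the arithmetic level.

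For (i) I argue by induction on $n\ge 3$. The base case is $M_3(1,1,1)=\begin{pmatrix} 1 & -1 \\ 1 & 0\end{pmatrix}^{3}=-Id$, which is the quiddity of the unique triangulation of a triangle. For $n\ge 4$, a triangulation of a convex $n$-gon has at least one ear; at its tip $i$ one has $a_i=1$, and since the base of the ear is a genuine diagonal it lies in a second triangle, so the two neighbouring vertices satisfy $a_{i-1},a_{i+1}\ge 2$. Cutting off the ear gives a triangulation of a convex $(n-1)$-gon with quiddity $(a_1,\ldots,a_{i-1}-1,a_{i+1}-1,\ldots,a_n)$; after a cyclic rotation placing $i$ away from the ends of the tuple, the local identity yields $M_n(a_1,\ldots,a_n)=M_{n-1}(\ldots,a_{i-1}-1,a_{i+1}-1,\ldots)=\pm Id$ by the induction hypothesis. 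Strict positivity of all entries is clear, since every vertex lies on at least one triangle.

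For (ii) I again induct on $n\ge 3$. For $n=3$ the only positive triple with $a_1+a_2+a_3=3$ is $(1,1,1)$, the quiddity of the triangle. For $n\ge 4$ the key step is to exhibit an index $i$ with $a_i=1$ and $a_{i-1},a_{i+1}\ge 2$; granting this, the local identity (after a cyclic rotation) turns $(a_1,\ldots,a_n)$ into a positive $(n-1)$-tuple $(a'_1,\ldots,a'_{n-1})$ with $M_{n-1}=\pm Id$ and $\sum a'_j=\sum a_i-3=3(n-1)-6$. By induction $(a'_j)$ is the quiddity of a triangulation $\mathcal{T}'$ of a convex $(n-1)$-gon $P'$; the two entries that were decreased correspond to two adjacent vertices of $P'$, and gluing a triangle onto the edge joining them produces a triangulation of a convex $n$-gon whose quiddity is, up to cyclic relabelling, $(a_1,\ldots,a_n)$.

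The main obstacle is precisely this ``removable $1$'' lemma. Finding an entry equal to $1$ is the classical part: one shows that if all $a_i\ge 2$ then $M_n(a_1,\ldots,a_n)\ne\pm Id$, by an induction tracking suitable strict monotonicity inequalities among the entries of a product of matrices $\begin{pmatrix} a & -1 \\ 1 & 0 \end{pmatrix}$ with $a\ge 2$. Ensuring the two neighbours of such an entry are $\ge 2$ is where the hypothesis $\sum a_i=3n-6$ is used: with the extra degeneration identity $\begin{pmatrix} a & -1 \\ 1 & 0 \end{pmatrix}\begin{pmatrix} 0 & -1 \\ 1 & 0 \end{pmatrix}\begin{pmatrix} b & -1 \\ 1 & 0 \end{pmatrix}=-\begin{pmatrix} a+b & -1 \\ 1 & 0 \end{pmatrix}$ one checks that every positive solution satisfies $\sum a_i\le 3n-6$, with equality forcing each entry $1$ to have both neighbours $\ge 2$ (an entry $1$ adjacent to an entry $\le 1$ would, after one or two reductions, yield a shorter positive solution whose entry sum is strictly below the bound, a contradiction). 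Keeping the bookkeeping between tuple length and entry sum consistent across the two reduction moves is the delicate point; everything else is the two routine inductions above.
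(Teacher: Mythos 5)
First, a remark on context: the paper does not prove this statement; it quotes it as the Conway--Coxeter theorem (citing \cite{MO}, Théorème 3.3, with \cite{Hen} indicated for a proof), so there is no internal proof to compare with. Your route is the classical reduction proof, and its easy parts are in order: both local matrix identities are correct (the degenerate one, with a $0$ entry, is the same identity the paper uses in the proof of Lemme \ref{54}), cyclic invariance of the condition $M_n=\pm Id$ is correctly justified by conjugation, part (i) is essentially complete (every triangulation of an $n$-gon, $n\ge 4$, has an ear, whose tip has quiddity $1$ and whose neighbours have quiddity $\ge 2$), and the gluing step in (ii), together with the bookkeeping $n\mapsto n-1$, $\sum a_i\mapsto\sum a_i-3$, is sound.

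The genuine gap sits exactly where you yourself locate ``the main obstacle'', and it is not closed. (1) The existence of an entry equal to $1$ in any positive solution is only gestured at (``suitable strict monotonicity inequalities''); this is precisely the paper's Lemme \ref{35} (\cite{O}, Lemme 2.1), which you neither prove nor cite. (2) The inequality $\sum a_i\le 3n-6$ for \emph{every} positive solution is asserted with ``one checks''; to prove it by your own reduction method you must show that every positive solution of length $\ge 4$ can be shortened by the inverse of operation (a) or (b) into a shorter \emph{positive} solution, which is exactly the case analysis of Ovsienko's Théorème \ref{33} (patterns $(c,1,1,d)$, runs of consecutive $1$'s, wrap-around positions, positivity of the reduced entries) --- the ``delicate bookkeeping'' you acknowledge but do not carry out. (3) As written, your parenthetical for the equality case does not produce a contradiction: contracting a pattern $(c,1,1,d)$ yields a solution of length $n-3$ with sum $3n-9$, which is strictly \emph{above} its bound $3(n-3)-6=3n-15$, not below it. The correct deduction is the contrapositive: apply the (still unproved) inequality to the reduced solution to get $\sum a_i-3\le 3(n-3)-6$, hence $\sum a_i\le 3n-12<3n-6$, contradicting the assumed equality; note also that you only need \emph{one} entry $1$ with both neighbours $\ge 2$, not all of them. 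So the skeleton is the standard and workable one, but the pivotal ``removable $1$'' lemma on which both inductions rest is not established.
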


\noindent Pour une preuve de ce théorème on peut également consulter \cite{Hen}.
\\
\\Pour $n \geq 6$, il existe de nombreux $n$-uplets d'entiers strictement positifs solutions de $(E_{Id})$ qui ne peuvent pas être obtenus avec des triangulations de polygones. L'ensemble de ces $n$-uplets solutions est donné dans \cite{O}. Pour le décrire, on a besoin des deux opérations suivantes:

\begin{enumerate}
\item[(a)]
$(a_1,\ldots,a_i,a_{i+1},\ldots,a_n)
\mapsto
(a_1,\ldots,a_i+1,\,1,\,a_{i+1}+1,\ldots,a_n)$,
\\

\item[(b)]
$(a_1,\ldots,a_i,\ldots,a_n)
\mapsto
(a_1,\ldots,a_{i-1},a'_i,\,1,\,\,1,\,a''_i,a_{i+1},\ldots,a_n)$,
avec $a'_i+a''_i=a_i+1$.
\end{enumerate}

Comme indiqué précédemment, les solutions de $(E_{Id})$ sont invariantes par permutations circulaires. On considère donc les séquences $(a_1,\ldots,a_n)$ comme des séquences infinies $n$-périodiques. Les opérations ci-dessus sont ainsi possibles pour tout $i$ compris entre $1$ et $n$. 
\\
\\On montre par un simple calcul que l'opération (a) conserve la matrice $M_{n}(a_{1},\ldots,a_{n})$ et que l'opération (b) transforme la matrice $M_{n}(a_{1},\ldots,a_{n})$ en son opposée. On a également besoin de l'objet combinatoire ci-dessous définit dans \cite{O}:

\begin{defn}
\label{32}

(i) Une 3$d$-dissection est un découpage d'un polygone convexe $P$ par des diagonales ne se croisant qu'aux sommets de $P$ et tel que chaque sous-polygone résultant de ce découpage possède un nombre de sommets égal à un multiple de $3$.
\\
\\(ii) On appelle quiddité associée à la 3$d$-dissection de $P$ la séquence $(a_{1},\ldots,a_{n})$
où $a_{i}$ est égal au nombre de sous-polygones utilisant le sommet $i$.

\end{defn}

On donne ci-dessous quelques exemples de 3$d$-dissection avec leur quiddité:

$$
\shorthandoff{; :!?}
\xymatrix @!0 @R=0.70cm @C=0.7cm
{
&&3\ar@{-}[rrd]\ar@{-}[lld]\ar@{-}[lddd]\ar@{-}[rddd]&
\\
1\ar@{-}[rdd]&&&& 1\ar@{-}[ldd]\\
\\
&2\ar@{-}[rr]&& 2
}
\qquad
\xymatrix @!0 @R=0.45cm @C=0.45cm
{
&&&1\ar@{-}[rrd]\ar@{-}[lld]
\\
&2\ar@{-}[ldd]\ar@{-}[rrrr]&&&& 2\ar@{-}[rdd]&\\
\\
1\ar@{-}[rdd]&&&&&& 1\ar@{-}[ldd]\\
\\
&1\ar@{-}[rrrr]&&&&1
}
\qquad
\xymatrix @!0 @R=0.32cm @C=0.45cm
 {
&&&2\ar@{-}[dddddddd]\ar@{-}[lld]\ar@{-}[rrd]&
\\
&1\ar@{-}[ldd]&&&& 1\ar@{-}[rdd]\\
\\
1\ar@{-}[dd]&&&&&&1\ar@{-}[dd]\\
\\
1\ar@{-}[rdd]&&&&&&1\ar@{-}[ldd]\\
\\
&1\ar@{-}[rrd]&&&& 1\ar@{-}[lld]\\
&&&2&
}
$$

Les solutions de $(E_{Id})$ sont données par les deux théorèmes de V.Ovsienko énoncés ci-dessous:

\begin{thm}[\cite{O}, Théorème 2]
\label{33}

Tout $n$-uplet d'entiers strictement positifs solution de $(E_{Id})$ peut être obtenu en appliquant les opérations (a) et (b) à $(1,1,1)$.

\end{thm}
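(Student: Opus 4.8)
The plan is to proceed by strong induction on $n$, the length of the $n$-uplet. For the base cases one checks directly: the only solution of $(E_{Id})$ of length $1$ or $2$ does not occur with strictly positive entries (indeed $M_1(a_1)$ and $M_2(a_1,a_2)$ are never $\pm Id$ for positive integers), and for $n=3$ one verifies that $(1,1,1)$ is the unique strictly positive solution, which is the starting point itself. So assume $n \geq 4$ and that every strictly positive solution of $(E_{Id})$ of length strictly less than $n$ is obtained from $(1,1,1)$ by operations (a) and (b). Since both operations increase the length and are, as noted in the excerpt, invertible in the appropriate sense (operation (a) has an inverse that deletes a $1$ flanked by two entries $\geq 2$ and decreases each by $1$; operation (b) has an inverse that deletes two consecutive $1$'s and merges their neighbours), it suffices to show that any strictly positive solution $(a_1,\ldots,a_n)$ with $n \geq 4$ contains a pattern to which one of these inverse operations applies, yielding a shorter strictly positive solution still equal to $\pm Id$.

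The key combinatorial input is that a strictly positive solution of $(E_{Id})$ of length $n \geq 4$ must contain an entry equal to $1$. This follows from a positivity/growth estimate on the matrices $M_k(a_1,\ldots,a_k)$ — reading the product from the right, if all $a_i \geq 2$ one shows by induction that the entries of $M_n(a_1,\ldots,a_n)$ grow (in absolute value the top-left entry is a continuant that is strictly increasing and exceeds $1$ once $n \geq 2$ with all $a_i\ge 2$), so the product cannot be $\pm Id$; hence some $a_i = 1$. This is the step I expect to be the main obstacle: one needs the right monotonicity statement about continuants $K(a_1,\ldots,a_n)$ and a clean argument that rules out $\pm Id$ when no entry is $1$, and also the companion fact that one cannot have two cyclically adjacent $1$'s unless $n=3$ (the case $(\ldots,1,1,\ldots)$ with $n\geq 4$: either it reduces via the inverse of (b), or a short computation shows the two $1$'s collapse a block and again shorten the solution). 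Handling the cyclic adjacency bookkeeping carefully — remembering that the sequence is treated as $n$-periodic, so "$a_i=1$ with both neighbours $\geq 2$" and "two consecutive $1$'s" genuinely exhaust the possibilities around any entry equal to $1$ — is where the care lies.

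With that structural fact in hand the induction closes quickly. Pick an index $i$ with $a_i = 1$. If its two cyclic neighbours $a_{i-1}$ and $a_{i+1}$ are both $\geq 2$, apply the inverse of operation (a) at position $i$: this deletes $a_i$ and replaces $a_{i-1},a_{i+1}$ by $a_{i-1}-1,a_{i+1}-1$, both still $\geq 1$, giving a strictly positive solution of length $n-1$ (and still $=\pm Id$, since (a) preserves $M_n$). If instead some neighbour, say $a_{i+1}$, also equals $1$, then — using $n \geq 4$ — apply the inverse of operation (b) to the pair $(a_i,a_{i+1})=(1,1)$: delete both and merge $a_{i-1},a_{i+2}$ into a single entry $a_{i-1}+a_{i+2}-1 \geq 1$, obtaining a strictly positive solution of length $n-2$, equal to $\pm Id$ because (b) changes $M_n$ only up to sign. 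In either case the induction hypothesis expresses the shorter solution — hence, reversing the operation, the original $(a_1,\ldots,a_n)$ — as the image of $(1,1,1)$ under a sequence of operations (a) and (b). This completes the proof.
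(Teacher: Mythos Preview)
Your approach is exactly what the paper sketches: it states Lemma~\ref{35} (any strictly positive solution of $(E_{Id})$ contains a~$1$), remarks that this lemma proves Theorem~\ref{33} by induction, and refers to~\cite{O} for the details; your elaboration --- locate a~$1$, then undo~(a) if both cyclic neighbours are $\geq 2$, or undo~(b) if a neighbour is also~$1$ --- is the intended argument.

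Two slips to clean up. Operation~(b) replaces the single entry $a_i$ by the four entries $a'_i,1,1,a''_i$, so its inverse shortens the sequence by~$3$, not by~$2$; consequently, in your Case~2 for $n\in\{4,5\}$ the reduced sequence would have length~$1$ or~$2$, where no strictly positive solution of $(E_{Id})$ exists. This simply means Case~2 is vacuous for those~$n$ (so Case~1 handles them), but you should say so rather than invoke the induction hypothesis on a nonexistent shorter solution. Separately, your parenthetical claim that one cannot have two cyclically adjacent~$1$'s unless $n=3$ is false --- for instance $(1,1,1,1,1,1)$ solves $(E_{Id})$ --- but since your argument never actually uses it, just drop it.
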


\begin{thm}[\cite{O}, Théorème 1]
\label{34}

Un $n$-uplet d'entiers strictement positifs solution de $(E_{Id})$ est une quiddité d'une 3$d$-dissection d'un polygone convexe à $n$ sommets et réciproquement.

\end{thm}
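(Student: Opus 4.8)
La stratégie que je suivrais consiste à établir séparément les deux implications, en m'appuyant pour le sens direct sur le Théorème \ref{33}. Pour la réciproque (toute quiddité d'une 3$d$-dissection est solution de $(E_{Id})$), je procéderais par récurrence sur le nombre de sous-polygones en \og retirant une oreille \fg, ce qui ramène tout le contenu matriciel à une identité élémentaire. Pour le sens direct (toute solution est une telle quiddité), il suffira de vérifier que les opérations (a) et (b) préservent la propriété d'être la quiddité d'une 3$d$-dissection, le triplet initial $(1,1,1)$ étant la quiddité du triangle (unique 3$d$-dissection d'un $3$-gone).

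Pour la réciproque, je fixerais une 3$d$-dissection de $P$. Si elle n'a qu'un sous-polygone, $P$ est un $3k$-gone, sa quiddité est $(1,\ldots,1)$ et, avec $L=\begin{pmatrix}1&-1\\1&0\end{pmatrix}$, on a $M_{3k}(1,\ldots,1)=L^{3k}=(-1)^{k}Id$ car $L^{3}=-Id$. Sinon, le graphe dual de la dissection étant un arbre, il admet une feuille, donc il existe une oreille : un sous-polygone $R$ à $3k$ sommets dont un seul côté est une diagonale $d$, les $3k-2$ sommets de $R$ distincts des extrémités de $d$ n'appartenant qu'à $R$ et ayant donc une quiddité $1$. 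Quitte à permuter circulairement, la quiddité s'écrit $(a_{1},1,\ldots,1,a_{3k},a_{3k+1},\ldots,a_{n})$ avec $3k-2$ entrées égales à $1$. En remplaçant $R$ par la seule diagonale $d$, on obtient une 3$d$-dissection d'un polygone à $n-(3k-2)$ sommets, de quiddité $(a_{1}-1,a_{3k}-1,a_{3k+1},\ldots,a_{n})$, où $a_{1},a_{3k}\geq 2$ puisque $u_{0}$ et $u_{1}$ appartiennent aussi au sous-polygone situé de l'autre côté de $d$. Le cœur du calcul est alors l'identité $L^{3k-2}=(-1)^{k+1}L$ (conséquence de $L^{3}=-Id$), jointe à
\[
\begin{pmatrix}a_{3k}&-1\\1&0\end{pmatrix}L\begin{pmatrix}a_{1}&-1\\1&0\end{pmatrix}=\begin{pmatrix}a_{3k}-1&-1\\1&0\end{pmatrix}\begin{pmatrix}a_{1}-1&-1\\1&0\end{pmatrix},
\]
qui se vérifie directement. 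En substituant ce bloc dans le produit définissant $M_{n}$, on en déduit $M_{n}(a_{1},1,\ldots,1,a_{3k},\ldots,a_{n})=(-1)^{k+1}M_{n-(3k-2)}(a_{1}-1,a_{3k}-1,a_{3k+1},\ldots,a_{n})$, de sorte que le membre de gauche vaut $\pm Id$ dès que le membre de droite le vaut ; l'hypothèse de récurrence conclut.

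Pour le sens direct, j'interpréterais géométriquement les deux opérations. L'opération (a) correspond à coller un triangle sur un côté du bord (entre les sommets $i$ et $i+1$) : le sommet ajouté a pour quiddité $1$ et les valeurs en $i$ et $i+1$ augmentent de $1$, ce qui reproduit exactement (a) et laisse une 3$d$-dissection. L'opération (b) correspond à \og dilater un coin \fg : au sommet $i$, de quiddité $a_{i}$, les sous-polygones incidents $R_{1},\ldots,R_{a_{i}}$ sont rangés en éventail ; on en choisit un, $R_{t}$ (un $3m$-gone), et l'on remplace le sommet $i$ par un arc de quatre sommets $i'\!-\!v_{1}\!-\!v_{2}\!-\!i''$, le sous-polygone $R_{t}$ devenant un $3(m+1)$-gone, les $R_{1},\ldots,R_{t-1}$ restant attachés à $i'$ et les $R_{t+1},\ldots,R_{a_{i}}$ à $i''$. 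Alors $v_{1},v_{2}$ sont de quiddité $1$, $a'_{i}=t$ et $a''_{i}=a_{i}+1-t$, d'où $a'_{i}+a''_{i}=a_{i}+1$ : on retrouve précisément (b), et la dissection obtenue reste une 3$d$-dissection puisque chaque sous-polygone garde un nombre de sommets multiple de $3$. Une récurrence sur le nombre d'opérations (a)/(b) appliquées à $(1,1,1)$, combinée au Théorème \ref{33}, fournit alors le sens direct.

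Le point que j'attends comme principal obstacle est justement cette réalisation géométrique de (b) : il faut contrôler l'éventail au sommet $i$, s'assurer que le choix de $R_{t}$ engendre bien tous les couples $(a'_{i},a''_{i})$ admissibles, et vérifier la préservation de la condition \og multiple de $3$ \fg après dilatation. Je noterais enfin que ces deux arguments se condensent en une seule assertion — les quiddités de 3$d$-dissections sont exactement les suites obtenues à partir de $(1,1,1)$ par (a) et (b) — dont les deux implications du théorème découlent simultanément, puisque (a) conserve $M_{n}$, (b) la change en son opposée, et $M_{3}(1,1,1)=-Id$.
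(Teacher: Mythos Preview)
Your proposal is correct. For the sense direct (solution $\Rightarrow$ quiddit\'e), you proceed exactly as the paper does: interpret (a) as gluing a triangle on an edge, interpret (b) as splitting a vertex and enlarging one incident sub-polygon by three vertices, and then invoke Th\'eor\`eme~\ref{33} together with $M_{3}(1,1,1)=-Id$. Your verification that the choice of $R_{t}$ in the fan at vertex $i$ produces all admissible splittings $a'_{i}+a''_{i}=a_{i}+1$ is precisely the point the paper leaves implicit.

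For the r\'eciproque (quiddit\'e $\Rightarrow$ solution), you take a somewhat different and more self-contained route than the paper's sketch. The paper reduces everything to the geometric inverses of (a) and (b), i.e.\ it argues that any 3$d$-dissection can be dismantled down to a triangle by undoing these two moves, whence the matrix is $\pm Id$ by the known effect of (a)/(b) on $M_{n}$. You instead remove an arbitrary extremal $3k$-gon in one shot and supply the explicit matrix identity
\[
\begin{pmatrix}a_{3k}&-1\\1&0\end{pmatrix}L^{3k-2}\begin{pmatrix}a_{1}&-1\\1&0\end{pmatrix}
=(-1)^{k+1}\begin{pmatrix}a_{3k}-1&-1\\1&0\end{pmatrix}\begin{pmatrix}a_{1}-1&-1\\1&0\end{pmatrix},
\]
which bypasses the need to decompose the ear-removal as a chain of inverse (a)'s and (b)'s. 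This buys you a cleaner induction (on the number of sub-polygons rather than on the number of elementary moves) and an argument that does not depend on Th\'eor\`eme~\ref{33} for this implication. The paper's approach, on the other hand, is more economical once (a)/(b) have been geometrically understood, since both implications then follow from the single statement you yourself note at the end. One cosmetic slip: the symbols $u_{0},u_{1}$ appear without definition; you mean the two endpoints of the diagonal $d$ (the sommets $1$ and $3k$ after your cyclic relabelling).
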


La démonstration utilise le lemme suivant (pour une preuve complète voir \cite{O}).

\begin{lem}[\cite{O}, lemme 2.1]
\label{35}

Si $(a_{1},\ldots,a_{n})$ est une solution de $(E_{Id})$ alors il existe $i$ dans $\llbracket 1~;~ n \rrbracket$ tel que $a_{i}=1$.

\end{lem}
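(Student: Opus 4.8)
The plan is to prove the contrapositive: if $(a_{1},\ldots,a_{n})$ is an $n$-uplet of integers with $a_{i}\geq 2$ for every $i\in\llbracket 1~;~n\rrbracket$, then $M_{n}(a_{1},\ldots,a_{n})\neq\pm Id$. Since the entries of a solution of $(E_{Id})$ are strictly positive integers, this is exactly the statement that some $a_{i}$ must equal $1$. The underlying idea is to control the top-left coefficient of the partial products $\begin{pmatrix} a_{k} & -1 \\ 1 & 0 \end{pmatrix}\cdots\begin{pmatrix} a_{1} & -1 \\ 1 & 0 \end{pmatrix}$, which is the (negative) continuant of $a_{1},\ldots,a_{k}$, and to show that it grows strictly as soon as all the $a_{i}$ are at least $2$.

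Concretely, I would set $N_{0}=Id$ and $N_{k}=\begin{pmatrix} a_{k} & -1 \\ 1 & 0 \end{pmatrix}N_{k-1}$, so that $N_{n}=M_{n}(a_{1},\ldots,a_{n})$, and denote by $p_{k}$ and $r_{k}$ the top-left and bottom-left coefficients of $N_{k}$. A one-line matrix computation gives $r_{k}=p_{k-1}$ and $p_{k}=a_{k}p_{k-1}-p_{k-2}$ for $k\geq 1$, with the conventions $p_{0}=1$ and $p_{-1}=r_{0}=0$. Using $a_{k}\geq 2$ one then proves by induction on $k\geq 1$ the statement ``$p_{k}\geq p_{k-1}+1$ and $p_{k-1}\geq 0$'': the base case $k=1$ reads $p_{1}=a_{1}\geq 2=p_{0}+1$, and for the inductive step $p_{k}=a_{k}p_{k-1}-p_{k-2}\geq 2p_{k-1}-p_{k-2}=p_{k-1}+(p_{k-1}-p_{k-2})\geq p_{k-1}+1$, using that $p_{k-1}-p_{k-2}\geq 1$ (two integers with $p_{k-1}>p_{k-2}\geq 0$) and hence $p_{k-1}\geq 1>0$. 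Telescoping gives $p_{n}\geq p_{0}+n=n+1\geq 2$ for every $n\geq 1$.

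Since the top-left coefficient of $\pm Id$ equals $\pm 1$, the inequality $p_{n}\geq 2$ shows that $M_{n}(a_{1},\ldots,a_{n})\neq\pm Id$, which is the desired contrapositive and finishes the proof.

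There is essentially no serious obstacle here: the only point needing a little care is the bookkeeping of the (double) induction on monotonicity and positivity of the $p_{k}$, together with the convention $p_{-1}=0$ that makes the recursion $p_{k}=a_{k}p_{k-1}-p_{k-2}$ valid down to $k=1$; everything else is routine. An equivalent route would be to invoke directly the classical description of the coefficients of $M_{n}(a_{1},\ldots,a_{n})$ as continuants, together with the standard fact that the continuant $K(a_{1},\ldots,a_{n})$ is strictly increasing in $n$ when all its arguments are $\geq 2$.
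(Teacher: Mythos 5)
Your proof is correct. Note that the paper itself does not prove Lemma \ref{35}: it states it as a quotation of \cite{O} (lemme 2.1) and explicitly refers the reader there for a complete proof, so there is no in-paper argument to compare with step by step. Your contrapositive argument is the standard elementary one via continuants: with $p_k$ the top-left entry of the partial product $M_k(a_1,\ldots,a_k)$ one indeed has $r_k=p_{k-1}$ and $p_k=a_kp_{k-1}-p_{k-2}$ (conventions $p_0=1$, $p_{-1}=0$), your double induction ``$p_k\geq p_{k-1}+1$ and $p_{k-1}\geq 0$'' goes through (the key inequality $a_kp_{k-1}\geq 2p_{k-1}$ is legitimate because the hypothesis at the previous step gives $p_{k-1}\geq 1$), and $p_n\geq n+1\geq 2$ is incompatible with the top-left entry $\pm1$ of $\pm Id$. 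This has the advantage of being self-contained where the paper only cites Ovsienko. An alternative route, closer to the tools the paper actually imports, would be to invoke Theorems \ref{51} and \ref{52} from \cite{MO}: a sequence all of whose entries are $\geq 2$ satisfies the minimality criterion, hence would be the unique minimal presentation of $\overline{Id}$ in $PSL_2(\mathbb{Z})$; but a direct check shows no presentation of $\overline{Id}$ of length $1$ or $2$ exists and $(1,1,1)$ is one of length $3$, so the minimal presentation contains $1$'s, a contradiction. Your continuant growth computation is essentially the mechanism underlying that minimality criterion, so the two approaches are close in spirit, yours being the more elementary.
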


Ce lemme permet de montrer le théorème \ref{33} par récurrence. La démonstration du théorème \ref{34} repose sur l'interprétation géométrique des opérations (a) et (b). Si on se donne une 3$d$-dissection d'un polygone convexe à $n$ sommets, alors l'opération (a) consiste à rajouter un triangle sur l'arrête $(i,i+1)$ et l'opération (b) consiste à rajouter deux nouveaux sommets entre deux copies du sommet $i$ (voir ~\cite{O}). On en déduit le résultat par récurrence.

\begin{rem} 

{\rm 
M.Cuntz et T.Holm ont donné une description combinatoire des $n$-uplets d'entiers vérifiant l'équation $M_{n}(a_{1},\ldots,a_{n})=-Id$ en terme de triangulations pondérées (voir \cite{CH} pour plus de détails).
}

\end{rem}
 
\section{L'équation $M_{n}(a_{1},\ldots,a_{n})= \pm M$}
\label{c}

Dans cette section, on s'intéresse, pour $M \in SL_{2}(\mathbb{Z})$, aux propriétés de l'équation 
$(E_{M})$. On commence par chercher une condition suffisante sur $M$ pour que tous les $n$-uplets d'entiers strictement positifs solutions de $(E_{M})$ contiennent un 1.

\begin{prop}
\label{41}

Soit $M \in SL_{2}(\mathbb{Z})$. Si $M$ est d'ordre fini alors tous les $n$-uplets d'entiers strictement positifs solutions de $(E_{M})$ contiennent un 1.

\end{prop}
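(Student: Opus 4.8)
The plan is to deduce Proposition \ref{41} from Ovsienko's Lemma \ref{35} (his Lemma 2.1) by a simple concatenation trick. The elementary fact I will use is that concatenating two tuples corresponds to multiplying the associated matrices: directly from the definition of $M_n$ as an ordered product of the elementary matrices one gets $M_{p+q}(c_1,\ldots,c_p,c'_1,\ldots,c'_q)=M_q(c'_1,\ldots,c'_q)\,M_p(c_1,\ldots,c_p)$. Iterating this identity, the $kn$-tuple obtained by writing $k$ consecutive copies of $(a_1,\ldots,a_n)$ has associated matrix $M_n(a_1,\ldots,a_n)^k$ (an easy induction on $k$, using that $M_n(a_1,\ldots,a_n)$ commutes with its own powers).

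Next I would use the hypothesis: since $M$ has finite order, pick $d\geq 1$ with $M^d=Id$. Let $(a_1,\ldots,a_n)$ be an $n$-tuple of strictly positive integers solving $(E_M)$, so $M_n(a_1,\ldots,a_n)=\varepsilon M$ for some $\varepsilon\in\{-1,1\}$. Forming the $dn$-tuple consisting of $d$ copies of $(a_1,\ldots,a_n)$, its associated matrix is $M_n(a_1,\ldots,a_n)^d=(\varepsilon M)^d=\varepsilon^d M^d=\varepsilon^d\,Id=\pm Id$, where I use that the scalar $\varepsilon$ is central. Hence this $dn$-tuple, again a tuple of strictly positive integers, is a solution of $(E_{Id})$.

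Finally, applying Lemma \ref{35} to this solution of $(E_{Id})$ yields an index whose entry equals $1$; but every entry of the $dn$-tuple is one of $a_1,\ldots,a_n$, so some $a_i=1$, which is exactly the claim. I do not expect any genuine obstacle here: the argument is short, and the only points needing a line of verification are the concatenation formula and the bookkeeping of the signs $\varepsilon^d$, both routine. (The subtler converse — whether finite order of $M$ is also necessary in order that every positive solution of $(E_M)$ contain a $1$ — is not asserted by the proposition, and I would not address it here.)
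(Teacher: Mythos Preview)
Your argument is correct and is essentially identical to the paper's own proof: both concatenate $k$ (the order of $M$) copies of the tuple to obtain a solution of $(E_{Id})$ via $M_n(a_1,\ldots,a_n)^k=\varepsilon^k Id$, and then invoke Lemma~\ref{35}. The only difference is cosmetic---you spell out the concatenation identity and the sign bookkeeping a bit more explicitly.
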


\begin{proof}

Soient $M \in SL_{2}(\mathbb{Z})$ d'ordre fini égal à $k$ et $(a_1,\ldots,a_n)$ une solution de $(E_{M})$. Il existe $\epsilon$ dans $\{ -1,1\}$ tel que $M_{n}(a_1,\ldots,a_n)=\epsilon M$. Ainsi, \[M_{kn}(a_1,\ldots,a_n,\ldots,a_1,\ldots,a_n)=M_{n}(a_1,\ldots,a_n)^{k}=\epsilon^{k} Id.\] Ainsi, $(a_1,\ldots,a_n,\ldots,a_1,\ldots,a_n)$ est solution de $(E_{Id})$ et donc, par le lemme \ref{35}, un des $a_i$ est égal à 1.

\end{proof}	

\begin{rem}

{\rm Cette proposition n'a pas de réciproque (voir section \ref{e}).}

\end{rem}	

Une des propriétés les plus intéressantes vérifiées par $(E_{Id})$ est l'invariance par permutations circulaires de ces solutions, c'est-à-dire que si $(a_{1},\ldots,a_{n})$ est solution de $(E_{Id})$ alors $(a_{2},\ldots,a_{n},a_{1})$ l'est également. En effet, si celle-ci n'est pas vérifiée, il sera plus difficile d'obtenir une description combinatoire "simple" des solutions de $(E_{M})$ en terme de découpages de polygones puisqu'il faudra prévoir une façon d'introduire un point de départ dans la lecture de la séquence associée au découpage. On va donc chercher les matrices $M$ dont les écritures sous la forme $M_{n}(a_1,\ldots,a_n)$ sont invariantes par permutations circulaires. On dira que $M$ vérifie $\mathcal{P}$ si les solutions de $M_{k}(a_{1},\ldots,a_{k})=M$ sont invariantes par permutations circulaires quelle que soit la valeur de $k$.

\begin{prop}
\label{21}

$Id$ et $-Id$ sont les seules matrices vérifiant $\mathcal{P}$.

\end{prop}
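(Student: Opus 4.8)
L'idée directrice est de montrer d'abord que toute matrice $M$ vérifiant $\mathcal{P}$ doit commuter avec toutes les matrices de la forme $M_k(a_1,\ldots,a_k)$ qui lui sont égales, puis d'en déduire que $M$ est centrale. Plus précisément, supposons que $M$ vérifie $\mathcal{P}$ et qu'il existe un $k$-uplet $(a_1,\ldots,a_k)$ d'entiers (strictement positifs) tel que $M_k(a_1,\ldots,a_k)=M$. Par définition de $\mathcal{P}$, on a aussi $M_k(a_2,\ldots,a_k,a_1)=M$. Or la matrice $M_k(a_2,\ldots,a_k,a_1)$ s'obtient à partir de $M_k(a_1,\ldots,a_k)$ par conjugaison : en écrivant $A_i = \begin{pmatrix} a_i & -1 \\ 1 & 0 \end{pmatrix}$, on a $M_k(a_1,\ldots,a_k) = A_k\cdots A_1$ et $M_k(a_2,\ldots,a_k,a_1) = A_1 A_k \cdots A_2 = A_1 (A_k\cdots A_1) A_1^{-1}$. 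Donc la condition $\mathcal{P}$ force $A_1 M A_1^{-1} = M$, c'est-à-dire que $M$ commute avec $A_1$. En itérant la permutation circulaire, $M$ commute avec chacun des $A_i$.

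Le point suivant est qu'on peut toujours trouver une écriture de $M$ (pour $M$ quelconque dans $SL_2(\mathbb{Z})$, donc en particulier pour celle qui vérifie $\mathcal{P}$) faisant intervenir deux facteurs $A_i$ non proportionnels, de sorte que $M$ commute avec deux matrices engendrant $SL_2(\mathbb{Z})$, ou du moins une partie irréductible suffisamment grosse. Concrètement, l'introduction rappelle que $-Id = M_3(1,1,1)$, que $S = -M_5(1,1,2,1,1)$ et que $T = -M_3(1,1,2)$ ; en combinant, toute matrice $M$ admet une écriture $M_n(a_1,\ldots,a_n)$ avec des $a_i$ strictement positifs, et quitte à allonger cette écriture (en utilisant par exemple l'opération (a) qui préserve la matrice, ou les relations ci-dessus), on peut garantir qu'apparaissent au moins deux valeurs distinctes parmi les $a_i$, disons $A_1$ avec $a_1=1$ et un autre $A_j$ avec $a_j=2$. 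Alors $M$ commute avec $A_1$ et avec $A_j$. Il reste à vérifier que $A_1 = \begin{pmatrix} 1 & -1 \\ 1 & 0\end{pmatrix}$ et $A_2 = \begin{pmatrix} 2 & -1 \\ 1 & 0\end{pmatrix}$ (ou deux tels éléments) engendrent un sous-groupe dont le centralisateur dans $GL_2$ est réduit aux homothéties : c'est un calcul direct de centralisateur, on écrit $M = \begin{pmatrix} x & y \\ z & t\end{pmatrix}$, on impose $MA_1 = A_1 M$ et $MA_2 = A_2 M$ et on résout le système linéaire, qui donne $y = z = 0$ et $x = t$. Comme $\det M = 1$, on obtient $M = \pm Id$. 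Réciproquement, $Id$ et $-Id$ étant centrales, leurs écritures sont trivialement stables par permutation circulaire (la conjugaison les fixe), donc elles vérifient $\mathcal{P}$.

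Le principal obstacle technique sera de s'assurer proprement qu'on peut \emph{toujours} obtenir une écriture de $M$ avec deux facteurs non proportionnels : si $M$ n'admettait, par malheur, que des écritures de la forme $M_n(a,a,\ldots,a)$ avec un seul $a$ répété, l'argument de centralisateur s'effondrerait. Il faut donc exhiber explicitement, pour une matrice $M$ vérifiant $\mathcal{P}$, deux écritures distinctes ou bien argumenter via les opérations (a) et (b). Une voie sûre : partir de n'importe quelle écriture $M = M_n(a_1,\ldots,a_n)$, appliquer l'opération (a) en position $i$, ce qui remplace $(a_i,a_{i+1})$ par $(a_i+1,1,a_{i+1}+1)$ sans changer $M$ ; on obtient ainsi forcément un $1$ adjacent à une valeur $\geq 2$, d'où deux facteurs $A_1$ et $A_j$ avec $j\in\{2,3\}$ non proportionnels. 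Une fois ce point acquis, le reste n'est qu'un calcul de centralisateur dans $SL_2$, entièrement élémentaire.
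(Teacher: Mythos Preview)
Ton argument est correct et aboutit, mais il emprunte une voie différente de celle du papier. Les deux démonstrations partent de la même observation de conjugaison : si $M_n(a_1,\ldots,a_n)=M$, alors toute permutation circulaire est un conjugué de $M$ par un produit partiel, donc $\mathcal{P}$ force $M$ à commuter avec ces produits partiels (ou, dans ta version, avec chaque facteur $A_i$). La divergence est dans la seconde étape. Toi, tu fabriques via l'opération~(a) une écriture de $M$ contenant un $1$ et une valeur~$\geq 2$, puis tu calcules explicitement le centralisateur commun de $\begin{pmatrix}1&-1\\1&0\end{pmatrix}$ et $\begin{pmatrix}c&-1\\1&0\end{pmatrix}$ pour $c\neq 1$, qui se réduit aux scalaires. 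Le papier, lui, évite tout calcul de centralisateur par une astuce d'insertion plus conceptuelle : pour $A\in SL_2(\mathbb{Z})$ quelconque, on écrit $A=M_k(b_1,\ldots,b_k)$ et $A^{-1}=M_l(c_1,\ldots,c_l)$, puis on observe que $M_{n+k+l}(b_1,\ldots,b_k,c_1,\ldots,c_l,a_1,\ldots,a_n)=M$ est encore une écriture de $M$ ; la propriété $\mathcal{P}$ appliquée à cette nouvelle écriture donne directement que $M$ commute avec $M_k(b_1,\ldots,b_k)=A$, d'où $M$ est dans le centre de $SL_2(\mathbb{Z})$. L'approche du papier est plus élégante (elle montre la centralité sans calcul matriciel), la tienne est plus explicite et auto-contenue. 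Un point de présentation à rectifier chez toi : tu annonces le calcul de centralisateur pour $a_j=2$ mais l'opération~(a) ne produit qu'une valeur $\geq 2$ ; il suffit de remarquer que ton système linéaire $MA_a=A_aM$, $MA_b=A_bM$ donne $y=-z$ et $(a-b)z=0$ pour tous $a\neq b$, donc la conclusion vaut sans fixer $b=2$.
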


\begin{proof}

Soient $n \in \mathbb{N}^{*}$, $(a_{1},\ldots,a_{n})$ un $n$-uplet d'entiers strictement positifs et $k \in  \llbracket 1~;~ n-1 \rrbracket$. On a : 

\begin{eqnarray*}
M_{n}(a_{k+1},\ldots,a_{n},a_{1},a_{2},\ldots,a_{k}) &=& \begin{pmatrix}
   a_{k} & -1 \\
    1    & 0 
    \end{pmatrix}\ldots\begin{pmatrix}
   a_{2} & -1 \\
    1    & 0 
   \end{pmatrix} \begin{pmatrix}
   a_{1} & -1 \\
    1    & 0 
   \end{pmatrix} \begin{pmatrix}
   a_{n} & -1 \\
    1    & 0 
   \end{pmatrix} \ldots \begin{pmatrix}
   a_{k+1} & -1 \\
    1    & 0 
   \end{pmatrix}\\
                                &=&  \begin{pmatrix}
   a_{k} & -1 \\
    1    & 0 
    \end{pmatrix}\ldots \begin{pmatrix}
   a_{1} & -1 \\
    1    & 0 
   \end{pmatrix} \begin{pmatrix}
   a_{n} & -1 \\
    1    & 0 
   \end{pmatrix} \ldots \begin{pmatrix}
   a_{k+1} & -1 \\
    1    & 0 
   \end{pmatrix}\begin{pmatrix}
   a_{k} & -1 \\
    1    & 0 
    \end{pmatrix} \\
	                              & & \ldots\begin{pmatrix}
   a_{1} & -1 \\
    1    & 0 
   \end{pmatrix}\begin{pmatrix}
   a_{1} & -1 \\
    1    & 0 
    \end{pmatrix}^{-1}\begin{pmatrix}
   a_{2} & -1 \\
    1    & 0 
   \end{pmatrix}^{-1} \ldots \begin{pmatrix}
   a_{k} & -1 \\
    1    & 0 
   \end{pmatrix}^{-1} \\
	                              &=& M_{k}(a_{1},\ldots,a_{k}) M_{n}(a_{1},\ldots,a_{n}) M_{k}(a_{1},\ldots,a_{k})^{-1}. \\
\end{eqnarray*}																

Soit $M$ un élément du groupe modulaire vérifiant $\mathcal{P}$. Soient $n \in \mathbb{N}^{*}$ et $(a_{1},\ldots,a_{n})$ un $n$-uplet d'entiers strictement positifs tels que $M_{n}(a_{1},\ldots,a_{n})=M$. On a 
\begin{eqnarray*}
M &=& M_{n}(a_{k+1},\ldots,a_{n},a_{1},a_{2},\ldots,a_{k})~({\rm car}~M~{\rm v\acute erifie}~\mathcal{P}) \\
  &=& M_{k}(a_{1},\ldots,a_{k}) M_{n}(a_{1},\ldots,a_{n}) M_{k}(a_{1},\ldots,a_{k})^{-1}~{\rm (par~le~calcul~pr\acute ec \acute edent)} \\
	&=& M_{k}(a_{1},\ldots,a_{k}) M M_{k}(a_{1},\ldots,a_{k})^{-1}.\\
\end{eqnarray*}
Donc, \[M M_{k}(a_{1},\ldots,a_{k})= M_{k}(a_{1},\ldots,a_{k}) M.\]

Soit $A$ une matrice du groupe modulaire. Il existe un entier strictement positif $k$ et un $k$-uplet d'entiers strictement positifs $(b_{1},\ldots,b_{k})$ tels que $A=M_{k}(b_{1},\ldots,b_{k})$. De même, il existe un entier strictement positif $l$ et un $l$-uplet d'entiers strictement positifs $(c_{1},\ldots,c_{l})$ tels que $A^{-1}=M_{l}(c_{1},\ldots,c_{l})$. On peut alors rajouter dans n’importe quelle solution de $M_{n}(a_{1},\ldots,a_{n})=M$ les $b_{i}$ et les $c_{j}$ de la façon suivante :
\[M_{n+k+l}(b_{1},\ldots,b_{k},c_{1},\ldots,c_{l},a_{1},\ldots,a_{n}) = M.\]
Par ce qui précède, $M$ commute avec $M_{k}(b_{1},\ldots,b_{k})=A$. Ainsi, $M$ commute avec n’importe quelle matrice du groupe modulaire, c'est-à-dire $M$ est dans le centre du groupe modulaire. Donc, $M=\pm Id$.

\end{proof}

\section{Résolution de $(E_{S})$}
\label{d}

Le but de cette section est de rechercher les $n$-uplets d'entiers strictement positifs solutions de $(E_{S})$ et de démontrer le théorème \ref{23}. Par la proposition \ref{21}, les solutions de cette équation ne sont pas invariantes par permutations circulaires.

\subsection{Construction récursive des solutions}

On considère les deux équations suivantes: \begin{equation}
\tag{$E_{S}^{1}$}
M_{n}(a_1,\ldots,a_n)=S,
\end{equation} et \begin{equation}
\tag{$E_{S}^{2}$}
M_{n}(a_1,\ldots,a_n)=-S.
\end{equation} L'opération (a) conserve les solutions des équations $(E_{S}^{1})$ et $(E_{S}^{2})$. L'opération (b) échange les solutions des équations $(E_{S}^{1})$ et $(E_{S}^{2})$. Avant d'étudier en détail ces équations on a besoin de deux résultats dûs à V.Ovsienko et S.Morier-Genoud (voir \cite{MO}) concernant les présentations minimales d'éléments de $PSL_{2}(\mathbb{Z})$. Si $M \in SL_{2}(\mathbb{Z})$ on note $\overline{M}$ la classe de $M$ dans $PSL_{2}(\mathbb{Z})$. On a vu dans l'introduction que l'écriture d'un élément de $SL_{2}(\mathbb{Z})$ sous la forme $M_{n}(a_{1},\ldots,a_{n})$ n'est pas unique mais on dispose cependant des deux résultats suivants:

\begin{thm}[\cite{MO},Théorème 6.3]
\label{51}

L'écriture $\overline{M}=\overline{M_{k}(c_{1},\ldots,c_{k})}$ avec des coefficients $c_{i}$ strictement positifs est unique si $k$ est le plus petit possible. Une telle écriture sera appelé la présentation minimale de $\overline{M}$.

\end{thm}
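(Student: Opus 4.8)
Le cadre naturel est l'isomorphisme $PSL_{2}(\mathbb{Z}) \simeq \mathbb{Z}/2\mathbb{Z} \ast \mathbb{Z}/3\mathbb{Z}$, réalisé par $s = \overline{S}$ (d'ordre $2$) et $r = \overline{TS}$ (d'ordre $3$, puisque $(TS)^{3} = -Id$), avec le théorème de forme normale pour les produits libres : tout élément s'écrit de manière unique comme mot alterné en $s$ et $r^{\pm 1}$. Je commencerais par le dictionnaire élémentaire $\overline{T} = rs$ (car $rs = \overline{TS}\cdot\overline{S} = \overline{TS^{2}} = \overline{T}$ dans $PSL_2(\mathbb{Z})$), d'où $\overline{T^{a}S} = (rs)^{a}s = r(sr)^{a-1}$ et
\[
\overline{M_{k}(c_{1},\ldots,c_{k})} = r(sr)^{c_{k}-1}\, r(sr)^{c_{k-1}-1}\cdots r(sr)^{c_{1}-1} =: W(c_{1},\ldots,c_{k}).
\]
Chaque bloc $r(sr)^{c_{i}-1}$ commence et finit par $r$, si bien que deux blocs consécutifs se recollent le long de plages de $r$ : une plage de longueur $m$ donne $r^{m \bmod 3}$, et une plage de longueur multiple de $3$ disparaît, entraînant à son tour la simplification des deux $s$ qui l'encadrent. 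On vérifie que ces effondrements correspondent exactement aux configurations détectées, en sens inverse, par les opérations (a) et (b) de la section \ref{b}, complétées par la seule relation supplémentaire $\overline{M_{3}(1,1,1)} = \overline{Id}$ issue de $(TS)^{3}=-Id$.

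Je dirais qu'une suite $(c_{1},\ldots,c_{k})$ d'entiers strictement positifs est \emph{non réductible} si aucune des opérations raccourcissantes suivantes ne s'y applique : l'inverse de (a) (supprimer un $c_{i}=1$ avec $1<i<k$ et $c_{i-1},c_{i+1}\geq 2$, en décrémentant ses voisins), l'inverse de (b) (remplacer un facteur $(c',1,1,c'')$ par $(c'+c''-1)$), et l'effondrement de la suite $(1,1,1)$ tout entière en la suite vide. Puisque (a) conserve $M_{k}(\cdot)$ et (b) la change en son opposée, et puisque $\overline{M_{3}(1,1,1)}=\overline{Id}$, chacune de ces opérations raccourcit strictement la suite tout en préservant sa classe dans $PSL_{2}(\mathbb{Z})$ ; donc toute écriture minimale de $\overline{M}$ est non réductible, et toute écriture se raccourcit en une écriture non réductible du même élément.

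Le cœur de la preuve est alors le lemme de reconstruction : une suite non réductible $(c_{1},\ldots,c_{k})$ se retrouve de façon unique à partir de $\overline{M_{k}(c_{1},\ldots,c_{k})}$. Pour l'établir, je mettrais $W(c_{1},\ldots,c_{k})$ sous forme normale et je montrerais que le mot réduit obtenu retient à la fois $k$ et tous les $c_{i}$ : en situation générique les $k-1$ recollements de blocs subsistent sous forme des lettres $r^{2}$ du mot réduit, les $r$ isolés repèrent les positions internes des blocs, et les $s$ situés entre deux $r^{2}$ successifs dénombrent les $c_{i}-1$, de sorte que le mot réduit détermine $(c_{1},\ldots,c_{k})$. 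L'obstacle principal est le comportement aux deux extrémités : une suite non réductible peut commencer ou finir par une courte plage de $1$ (par exemple $(1,1,c)$ avec $c\geq 2$, qui représente $\overline{T^{\,c-1}}$), et alors la plage de $r$ du bord se simplifie partiellement dans $W$, de sorte que le dictionnaire « recollements $\leftrightarrow$ forme normale » doit être corrigé par une analyse de cas finie sur le préfixe et le suffixe de $1$ — de manière équivalente, on isole les quelques familles exceptionnelles de suites non réductibles dont l'élément associé est de la forme $\overline{T^{m}}$, $\overline{T^{m}S}$, etc., et on y vérifie l'unicité directement. Le lemme de reconstruction acquis, l'unicité de l'écriture minimale est immédiate : si $\overline{M}$ admet deux écritures minimales, toutes deux sont non réductibles et de même longueur, donc reconstruites à l'identique, donc égales ; et l'écriture non réductible ainsi obtenue est l'unique écriture minimale, puisque toute écriture s'y raccourcit. (Une variante de la même démonstration remplace la forme normale du produit libre par le théorème d'unicité des fractions continues négatives $[[c_{1},\ldots,c_{k}]] = c_{1}-\cfrac{1}{c_{2}-\cfrac{1}{\ddots}}$, les $c_{i}$ se lisant sur les continuants formant les entrées de $M_{k}(c_{1},\ldots,c_{k})$, avec la même subtilité de bord.)
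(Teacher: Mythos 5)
Premier constat : l'article ne démontre pas cet énoncé, il le cite tel quel de \cite{MO} (Théorème 6.3), au même titre que le théorème \ref{52} ; il n'y a donc pas de preuve interne à laquelle comparer votre tentative. Dans \cite{MO}, l'unicité est adossée au critère de minimalité (repris ici comme théorème \ref{52}) et à la correspondance avec les fractions continues négatives, c'est-à-dire la variante que vous évoquez en conclusion. Votre itinéraire principal, par la forme normale dans le produit libre $PSL_{2}(\mathbb{Z})\simeq\mathbb{Z}/2\mathbb{Z}\ast\mathbb{Z}/3\mathbb{Z}$, est une alternative classique et viable : le dictionnaire est correct ($rs=\overline{T}$, $\overline{T^{a}S}=r(sr)^{a-1}$, donc $\overline{M_{k}(c_{1},\ldots,c_{k})}=r(sr)^{c_{k}-1}\cdots r(sr)^{c_{1}-1}$), la correspondance entre effondrements de plages de $r$ et les inverses des opérations (a) et (b) aussi, et le schéma logique (toute écriture minimale est non réductible, puis injectivité de $(c_{1},\ldots,c_{k})\mapsto\overline{M_{k}(c_{1},\ldots,c_{k})}$ sur les suites non réductibles) donne bien l'unicité. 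Notez seulement que la décomposition en produit libre est un ingrédient extérieur de force comparable au résultat visé, mais il est standard et citable (voir par exemple \cite{A}).

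Le point faible est que le lemme de reconstruction, qui porte à lui seul toute l'unicité, n'est pas démontré : vous traitez le cas générique (tous les $c_{i}\geq 2$, où $W$ est déjà réduit et où les $c_{i}$ se lisent sur les $s$ séparés par les lettres $r^{2}$), puis vous renvoyez les cas de bord à une analyse finie sans l'effectuer. Or c'est exactement là que se cache la difficulté : lorsque $c_{1}=c_{2}=1$ ou $c_{k-1}=c_{k}=1$, une plage $r^{3}$ disparaît, les deux $s$ qui l'encadrent se simplifient, et pour les suites courtes la cascade traverse tout le mot (par exemple $W(1,1,c,1,1)$ se réduit à $(sr)^{c-2}s$, qui vaut $s$ pour $c=2$) ; c'est précisément dans ces configurations qu'une collision entre deux suites non réductibles distinctes pourrait a priori se produire, et l'unicité doit donc y être vérifiée explicitement, cas par cas. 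Il manque aussi la courte vérification que votre non-réductibilité interdit bien tout $1$ loin des extrémités : si $c_{i}=1$ avec $3\leq i\leq k-2$, soit ses deux voisins sont $\geq 2$ et l'inverse de (a) s'applique, soit un voisin vaut $1$ et la fenêtre $(c',1,1,c'')$ est entièrement intérieure, donc l'inverse de (b) s'applique ; sans cet argument, l'affirmation que les recollements subsistent en lettres $r^{2}$ hors des extrémités n'est pas acquise. En l'état, la structure de la preuve est bonne et aucune étape écrite n'est fausse, mais l'étape décisive reste à rédiger.
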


\begin{thm}[\cite{MO},proposition 6.4]
\label{52}

Si $\overline{M} \in PSL_{2}(\mathbb{Z})$ s'écrit sous la forme $\overline{M}=\overline{M_{k}(c_{1},\ldots,c_{k})}$, alors cette écriture est la présentation minimale de $\overline{M}$, si et seulement si $c_{i} \geq 2$, sauf peut-être aux extrémités de la séquence, c'est-à-dire, pour $c_{1}$ ou $c_{1}$ et $c_{2}$ et $c_{k}$, ou $c_{k-1}$ et $c_{k}$.

\end{thm}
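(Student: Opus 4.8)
Le plan est de traiter séparément les deux sens de l'équivalence, en s'autorisant à utiliser le théorème \ref{51} (la présentation minimale de $\overline{M}$ existe et est unique). \emph{Sens direct}: on montre par contraposée que si $\overline{M}=\overline{M_{k}(c_{1},\ldots,c_{k})}$ n'est pas de la forme annoncée, alors $k$ n'est pas minimal. On utilise les identités suivantes, obtenues par un calcul direct de produits de matrices $\begin{pmatrix}a&-1\\1&0\end{pmatrix}$, qui sont au fond les inverses des opérations (a) et (b) de la section \ref{b}:
\[
M_{3}(a,1,b)=M_{2}(a-1,b-1),\qquad \overline{M_{4}(a,1,1,b)}=\overline{M_{1}(a+b-1)},\qquad \overline{M_{3}(1,1,1)}=\overline{Id}.
\]
Insérées dans une séquence plus longue, elles montrent qu'un $1$ en position $i$ dont les deux voisins $c_{i-1},c_{i+1}$ sont $\geq 2$ peut être supprimé ($k\mapsto k-1$), qu'un bloc $1,1$ flanqué de deux entrées peut être résorbé ($k\mapsto k-3$), et qu'un bloc de trois $1$ consécutifs ne remplissant pas toute la séquence peut l'être aussi. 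En déterminant à quelle condition sur la position et l'entourage d'un $1$ aucune de ces réductions ne s'applique, on retrouve exactement la description de l'énoncé.

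\emph{Sens réciproque} (le point essentiel): on transporte le problème dans $PSL_{2}(\mathbb{Z})\simeq\mathbb{Z}/2\mathbb{Z}\ast\mathbb{Z}/3\mathbb{Z}=\langle s,u\mid s^{2}=u^{3}=1\rangle$, où $s=\overline{S}$ et $u=\overline{TS}=\overline{M_{1}(1)}$. Comme $\overline{T}=us$, on a le dictionnaire
\[
\overline{M_{1}(a)}=\overline{T^{a}S}=(us)^{a-1}u,\qquad\text{donc}\qquad\overline{M_{k}(c_{1},\ldots,c_{k})}=(us)^{c_{k}-1}u\cdot(us)^{c_{k-1}-1}u\cdots(us)^{c_{1}-1}u.
\]
La forme normale de chaque bloc $(us)^{c_{i}-1}u$ commence et finit par la syllabe $u$. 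Lorsque la séquence $(c_{i})$ est conforme, aucun $c_{i}$ intérieur ne vaut $1$, de sorte que la seule simplification possible à la jonction de deux blocs consécutifs est une fusion $u\cdot u\mapsto u^{2}$, sans annulation $u^{3}\mapsto 1$ — hormis une fois à chaque extrémité portant deux $1$. On obtient ainsi la forme normale de $\overline{M}$ dans le produit libre; ses invariants (longueur en syllabes, nombre de syllabes $u^{2}$, type des premières et dernières syllabes) permettent de reconstituer $k$ et la disposition des $1$ aux bouts. Donc $k$ ne dépend que de $\overline{M}$: il est minimal, et l'écriture considérée est la présentation minimale (on retrouve au passage l'unicité d'une telle écriture, en accord avec le théorème \ref{51}).

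\emph{Difficulté principale.} Elle est double. Pour le sens direct, il faut mener complètement l'analyse de cas sur l'entourage d'un $1$ pour garantir qu'une séquence non conforme est \emph{toujours} réductible, en prêtant attention aux extrémités (ainsi $(c_{1},1,1,c_{k})$ est réductible dès $k=4$, alors que le bloc $1,1$ de tête de $(1,1,c_{3},\ldots,c_{k})$ ne l'est pas). Pour le sens réciproque, le décompte des simplifications est immédiat à l'intérieur mais demande de distinguer les quatre configurations d'extrémité ($c_{1}\geq 2$; $c_{1}=1<c_{2}$; $c_{1}=c_{2}=1$, et symétriquement à l'autre bout). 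Une alternative consiste à établir directement l'unicité d'une écriture conforme par récurrence sur $k$, en épluchant le dernier bloc à l'aide du développement en fraction continue négative codé par la première ligne de $M_{k}(c_{1},\ldots,c_{k})$, puis à conclure grâce au théorème \ref{51}.
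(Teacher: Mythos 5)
You should first note a structural point: the paper never proves this statement — Théorème \ref{52} is imported verbatim from \cite{MO} (Proposition 6.4) and is used as a black box in the proofs of Lemme \ref{53}, Proposition \ref{55}, Lemme \ref{61} and Proposition \ref{64} — so there is no internal proof to compare your attempt with; it can only be judged on its own merits. Your overall strategy is sound and is essentially the standard one: the identities $M_{3}(a,1,b)=M_{2}(a-1,b-1)$, $M_{4}(a,1,1,b)=-M_{1}(a+b-1)$ and $M_{3}(1,1,1)=-Id$ (the inverses of the operations (a) and (b)) are correct and do show that a badly placed $1$ makes the writing reducible, and the dictionary $\overline{M_{k}(c_{1},\ldots,c_{k})}=(us)^{c_{k}-1}u\cdots(us)^{c_{1}-1}u$ in $PSL_{2}(\mathbb{Z})\simeq\mathbb{Z}/2\mathbb{Z}\ast\mathbb{Z}/3\mathbb{Z}$, with $s=\overline{S}$, $u=\overline{TS}$, is the right tool for proving minimality.

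However, as written your text is a programme rather than a proof: both halves stop exactly where the content lies, as you yourself admit under \emph{difficulté principale}. For the direct sense, the exhaustive discussion of the neighbourhood of each $1$ (in particular at positions $2$ and $k-1$, where reducibility depends on $c_{1}$, resp. $c_{k}$) is announced but not carried out. For the converse, the key claim — that for a conforming sequence the only cancellations $u^{3}\mapsto 1$ occur once at each extremity carrying two $1$'s, and that the invariants of the resulting normal form determine $k$ and the end pattern — is asserted without the count being made; and this is precisely where care is needed, because when the two extremities interact the cancellations cascade and the literal criterion fails: for instance $(1,1,1,1)$ has all its $1$'s in allowed end positions, yet $M_{4}(1,1,1,1)=M_{1}(1)^{4}=-M_{1}(1)$, so this presentation is not minimal. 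A complete proof (and a careful reading of the statement, as in \cite{MO}) must isolate these short degenerate configurations and run the four end-configuration cases explicitly; until those two case analyses are actually done, the argument has a genuine gap.
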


On retourne maintenant à l'étude de $(E_{S}^{1})$ et $(E_{S}^{2})$. On commence par l'étude de ces deux équations pour les petites valeurs de $n$.

\begin{lem}
\label{53}

Si $n \in \{1,2,3,4\}$ les équations $(E_{S}^{1})$ et $(E_{S}^{2})$ n'ont pas de solution. Si $n=5$, $(E_{S}^{1})$ n'a pas de solution et $(E_{S}^{2})$ a une unique solution (1,1,2,1,1).

\end{lem}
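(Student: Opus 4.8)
The plan is to exploit Theorems \ref{51} and \ref{52}: the matrix $S$ satisfies $\overline{S}=\overline{M_{1}(0)}$, so the presentation minimale of $\overline{S}$ in $PSL_2(\mathbb{Z})$ has length $k=1$ (with entry $0$, which is not positive — but of course $\overline{S}$ also equals $\overline{M_5(1,1,2,1,1)}$ and $\overline{-S}=\overline{S}$ in $PSL_2(\mathbb{Z})$). More directly, I would argue as follows. First I would record that any solution $(a_1,\dots,a_n)$ of $(E_S^1)$ or $(E_S^2)$ has $M_n(a_1,\dots,a_n)=\pm S$, hence $\overline{M_n(a_1,\dots,a_n)}=\overline{S}$ in $PSL_2(\mathbb{Z})$. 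By Theorem \ref{52}, the presentation minimale of $\overline{S}$ is obtained from $(a_1,\dots,a_n)$ by... — actually the cleaner route is: compute the presentation minimale of $\overline{S}$ directly. Since $S^2=-\mathrm{Id}$, we have $\overline{S}^2=\overline{\mathrm{Id}}$, so $\overline{S}$ has order $2$ in $PSL_2(\mathbb{Z})$; one checks that no word $M_k(c_1,\dots,c_k)$ with all $c_i\geq 1$ can equal $\pm S$ for $k\leq 4$ by a finite case analysis, and that $(1,1,2,1,1)$ works for $k=5$.

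Concretely, for the small cases I would just multiply matrices. For $n=1$: $M_1(a_1)=\begin{pmatrix} a_1 & -1 \\ 1 & 0\end{pmatrix}$, and this equals $\pm S=\pm\begin{pmatrix}0&-1\\1&0\end{pmatrix}$ forces $a_1=0$, impossible for $a_1\geq 1$. For $n=2$: $M_2(a_1,a_2)=\begin{pmatrix}a_1a_2-1 & -a_2 \\ a_1 & -1\end{pmatrix}$, whose bottom-right entry is $-1\neq 0$, so it is never $\pm S$. For $n=3$ and $n=4$ I would similarly expand (using the continuant formulas for the entries of $M_n$) and check the required entries: e.g. for $\pm S$ one needs the diagonal entries both zero, and a short computation with continuants $K(a_1,\dots,a_n)$ shows the diagonal entries are $K(a_2,\dots,a_{n-1})$-type expressions that cannot both vanish when all $a_i\geq 1$ and $n\leq 4$, because a continuant of positive integers of length $\geq 0$ is positive (with the convention $K(\emptyset)=1$). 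Alternatively, and perhaps more in the spirit of the paper, I would invoke Theorem \ref{52}: if $(a_1,\dots,a_n)$ with $n\le 4$ and all $a_i\ge 1$ were a solution, then after possibly trimming $1$'s at the ends we would get a presentation minimale of $\overline S$ of length $\le 4$ with inner entries $\ge 2$; listing the finitely many such words and computing shows none equals $\pm S$, while length $5$ does contain $(1,1,2,1,1)$.

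For $n=5$: I would first verify by direct multiplication that $M_5(1,1,2,1,1)=-S$ (this is already stated in the introduction: $S=-M_5(1,1,2,1,1)$), so $(1,1,2,1,1)$ solves $(E_S^2)$ and, since operation (b) changes the sign and (a) preserves it while both change $n$, no $5$-tuple solves $(E_S^1)$ — but that last point needs care. The cleanest argument for uniqueness at $n=5$: a solution $(a_1,\dots,a_5)$ gives $\overline{M_5(a_1,\dots,a_5)}=\overline S$. If some $a_i\ge 2$ in an interior position in a way that makes the word already reduced, Theorem \ref{52} says it is the presentation minimale, which has length $5$ and must therefore be a cyclic-free reduced word; by Theorem \ref{51} such a presentation is unique, and $(1,1,2,1,1)$ (read as a reduced word: the interior entry is $2$, the outer two pairs are $1$'s) is exactly of the allowed shape, hence it is the presentation minimale of $\overline S$. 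Any other positive $5$-tuple solution would either be already reduced of length $5$ (contradicting uniqueness unless equal to $(1,1,2,1,1)$) or reduce to something shorter, which is impossible since we showed lengths $\le 4$ give no solution. Finally one checks $(1,1,2,1,1)$ lies in $(E_S^2)$ and not $(E_S^1)$ by the sign in the direct computation, so $(E_S^1)$ has no solution for $n=5$.

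The main obstacle is the bookkeeping at $n=5$: separating the two sign equations $(E_S^1)$ and $(E_S^2)$ and ruling out any second positive $5$-tuple. I expect the smoothest path is to lean on Theorems \ref{51} and \ref{52} (uniqueness and the $\ge 2$-except-at-ends characterization of presentations minimales) rather than brute-force enumeration, using the already-noted identity $S=-M_5(1,1,2,1,1)$ to pin down the presentation minimale of $\overline S$ and its sign; the cases $n\le 4$ are then disposed of either by the same structural argument or by a one-line continuant/positivity computation.
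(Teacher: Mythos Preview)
Your proposal ultimately converges on exactly the paper's argument: verify $M_5(1,1,2,1,1)=-S$, note that $(1,1,2,1,1)$ satisfies the criterion of Theorem~\ref{52} (the single interior entry is $2$) and is therefore the pr\'esentation minimale of $\overline{S}$, then invoke Theorem~\ref{51} to conclude that there is no positive solution for $n\le 4$ and that the length-$5$ solution is unique; the sign check finishes. The paper does precisely this in three lines, without your detours through explicit matrix products for small $n$, continuant positivity, or ``trimming $1$'s at the ends'' --- the last of which is not a well-defined operation on these words (removing an entry changes the matrix) and is in any case unnecessary once Theorems~\ref{51} and~\ref{52} are available.
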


\begin{proof}

On vérifie facilement que $M_{5}(1,1,2,1,1)=-S$. Donc, par le théorème \ref{52}, la présentation minimale de $S$ vu comme élément de $PSL_{2}(\mathbb{Z})$ est $\overline{S}=\overline{M(1,1,2,1,1)}$. Donc, d'après le théorème \ref{51}, $(E_{S})$ n'a pas de solution si $n \in \{1,2,3,4\}$ et la seule solution de $(E_{S})$ pour $n=5$ est (1,1,2,1,1). Donc, $(E_{S}^{1})$ et $(E_{S}^{2})$ n'ont pas de solution si $n \in \{1,2,3,4\}$. Si $n=5$, $(E_{S}^{1})$ n'a pas de solution et $(E_{S}^{2})$ a une unique solution (1,1,2,1,1).

\end{proof}

On va maintenant essayer de trouver tous les $n$-uplets d'entiers strictement positifs solutions de $(E_{S})$. Comme $S$ est d'ordre fini, une solution de $(E_{S})$ contient nécessairement un 1 (proposition \ref{41}) mais on dispose d'un résultat plus précis :
	
\begin{lem}
\label{54}

Si $(a_{1},\ldots,a_{n})$ est un $n$-uplet d'entiers strictement positifs solution de $(E_{S})$ alors il existe $i \in \llbracket 2~;~ n-1 \rrbracket$ tel que $a_{i}=1$.

\end{lem}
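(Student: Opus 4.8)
The plan is to show that a solution of $(E_{S})$ cannot have its only $1$'s sitting at the extremities of the sequence, i.e.\ that we cannot have $a_{i}\geq 2$ for all $i\in\llbracket 2\,;\,n-1\rrbracket$. The key tool is Théorème \ref{52} on minimal presentations in $PSL_{2}(\mathbb{Z})$. Suppose, for contradiction, that $(a_{1},\ldots,a_{n})$ is a solution of $(E_{S})$ with $a_{i}\geq 2$ for every $i$ with $2\leq i\leq n-1$. Then the sequence $(a_{1},\ldots,a_{n})$ has all its entries $\geq 2$ except possibly $a_{1}$ and $a_{n}$, so by Théorème \ref{52} the equality $\overline{S}=\overline{M_{n}(a_{1},\ldots,a_{n})}$ is the minimal presentation of $\overline{S}$ in $PSL_{2}(\mathbb{Z})$. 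But by Lemme \ref{53} (together with the uniqueness in Théorème \ref{51}) the minimal presentation of $\overline{S}$ is $\overline{M_{5}(1,1,2,1,1)}$, of length $5$; hence $n=5$ and $(a_{1},\ldots,a_{n})=(1,1,2,1,1)$, which does contain $a_{3}=1$ with $3\in\llbracket 2\,;\,n-1\rrbracket$, a contradiction.

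There is a gap to close in the above: Théorème \ref{52} is about sequences with entries $\geq 2$ \emph{possibly except at the two ends} (one or two entries at each end), whereas "$a_{i}\geq 2$ for $2\leq i\leq n-1$" only allows one exceptional entry at each end. So the hypothesis I am contradicting is in fact \emph{stronger} than what Théorème \ref{52} needs, which is fine — if all interior entries are $\geq 2$ then a fortiori the presentation is minimal. The one case needing a separate glance is small $n$: for $n\leq 4$ Lemme \ref{53} already says there is no solution at all, so the statement is vacuous there; for $n=5$ the unique solution $(1,1,2,1,1)$ has its $1$ in the allowed range. Thus the argument only has real content for $n\geq 6$, where it produces an immediate contradiction with minimality.

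The main obstacle is making sure the logic around Théorème \ref{52} is applied in the correct direction. The cleanest way to phrase it: assume no interior entry equals $1$; then either $n\leq 5$ (handled by Lemme \ref{53}), or $n\geq 6$ and all of $a_{2},\ldots,a_{n-1}\geq 2$, so the writing $\overline{S}=\overline{M_{n}(a_{1},\ldots,a_{n})}$ satisfies the criterion of Théorème \ref{52} and is therefore the minimal presentation of $\overline{S}$; but Lemme \ref{53} identifies that minimal presentation as having length $5<n$, contradiction. Hence some $a_{i}$ with $i\in\llbracket 2\,;\,n-1\rrbracket$ equals $1$. I would also remark at the end that the same reasoning shows more generally that any solution of $M_{n}(a_{1},\ldots,a_{n})=\pm M$ with $n$ strictly larger than the length of the minimal presentation of $\overline{M}$ must have an interior entry equal to $1$, since this observation will likely be reused when treating $(E_{T})$.
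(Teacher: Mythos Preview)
Your argument is correct, but it follows a genuinely different route from the paper's proof. The paper does not argue by contradiction via Th\'eor\`eme~\ref{52}. Instead, it establishes the algebraic reduction
\[
M_{n}(a_{1},\ldots,a_{n})=\epsilon S
\;\Longleftrightarrow\;
M_{n-1}(a_{2},\ldots,a_{n-1},a_{n}+a_{1})=\epsilon\, Id,
\]
obtained by left-multiplying by $S$, cycling the resulting length-$(n{+}1)$ word, and then collapsing the block containing the inserted $0$. Ovsienko's Lemme~\ref{35} then forces some entry of the shorter sequence to equal $1$; since $a_{n}+a_{1}\geq 2$, that entry must be one of $a_{2},\ldots,a_{n-1}$.

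What each approach buys: the paper's reduction is the very identity on which the proof of Th\'eor\`eme~\ref{23} hinges (the bijection between solutions of $(E_{S})$ and $3d$-dissections \'echancr\'ees is built by passing from $(a_{1},\ldots,a_{n})$ to $(a_{1}+a_{n},a_{2},\ldots,a_{n-1})$), so proving the lemma this way derives a tool that will be reused immediately. Your route through Th\'eor\`eme~\ref{52} is shorter and, as you note, gives for free the general statement that any solution of $(E_{M})$ of length strictly greater than that of the minimal presentation of $\overline{M}$ has an interior $1$; this is precisely the mechanism the paper invokes later inside the proofs of Proposition~\ref{55} and Proposition~\ref{64}, so your observation is well taken. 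Either argument is adequate for the lemma itself; the paper's choice is motivated by what comes next in Section~\ref{d}.
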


\begin{proof}

Si $(a_{1},\ldots,a_{n})$ est solution de $(E_{S})$. Il existe $\epsilon$ dans $\{-1,1\}$ tel que $M_{n}(a_{1},\ldots,a_{n})=\epsilon S$.  On a \begin{eqnarray*}
M_{n}(a_{1},\ldots,a_{n})=\epsilon S & \Longleftrightarrow & SM_{n}(a_{1},\ldots,a_{n})=-\epsilon Id \\
                         & \Longleftrightarrow & M_{n+1}(a_{1},\ldots,a_{n},0)=-\epsilon Id \\
												 & \Longleftrightarrow & M_{n+1}(a_{2},\ldots,a_{n},0,a_{1})=-\epsilon Id. \\
\end{eqnarray*}

\noindent Or, $\begin{pmatrix}
    a_{1} & -1 \\
    1    & 0 
    \end{pmatrix} \begin{pmatrix}
    0 & -1 \\
    1    & 0 
    \end{pmatrix} \begin{pmatrix}
    a_{n} & -1 \\
    1    & 0 
    \end{pmatrix} = \begin{pmatrix}
    -a_{1}-a_{n} & 1 \\
    -1    & 0 
    \end{pmatrix}= -\begin{pmatrix}
    a_{1}+a_{n} & -1 \\
    1    & 0 
    \end{pmatrix}.$
\\
\\ \noindent Donc, $M_{n}(a_{1},\ldots,a_{n})=\epsilon S \Longleftrightarrow  M_{n-1}(a_{2},\ldots,a_{n}+a_{1})=\epsilon Id$. 
\\
\\ \noindent Donc, la séquence $(a_{2},\ldots,a_{n}+a_{1})$ contient un $1$ d'après le lemme \ref{35}. Comme $a_{n}+a_{1} \geq 2$, il existe $i$ dans $\llbracket 2~;~ n-1 \rrbracket$ tel que $a_{i}=1$.

\end{proof}

\noindent On dispose également du résultat ci-dessous :

\begin{prop}
\label{541}

$(a_{1},\ldots,a_{n})$ est solution de $(E_{S})$ si et seulement si $(a_{n},\ldots,a_{1})$ est solution de $(E_{S})$.

\end{prop}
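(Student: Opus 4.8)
The plan is to reduce the statement about $(E_S)$ for the reversed tuple to a statement about $(E_{Id})$, using the same trick as in the proof of Lemma~\ref{54}: left-multiplication by $S$ turns $(E_S)$ into $(E_{Id})$ for a tuple of length $n+1$. More precisely, I would first record the transpose/anti-automorphism identity for the matrices $M_n$. A direct computation shows that
\[
M_n(a_1,\ldots,a_n)^{T} = \widetilde{M_n(a_n,\ldots,a_1)},
\]
where $\widetilde{\cdot}$ denotes conjugation by $S$ (equivalently, one checks that transposing the product $\begin{psmallmatrix} a_n & -1 \\ 1 & 0\end{psmallmatrix}\cdots\begin{psmallmatrix} a_1 & -1 \\ 1 & 0\end{psmallmatrix}$ reverses the order of the factors and replaces each $\begin{psmallmatrix} a & -1 \\ 1 & 0\end{psmallmatrix}$ by its transpose $\begin{psmallmatrix} a & 1 \\ -1 & 0\end{psmallmatrix} = S\begin{psmallmatrix} a & -1 \\ 1 & 0\end{psmallmatrix}S^{-1}$, and all the interior $S^{-1}S$ cancel). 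Hence $M_n(a_n,\ldots,a_1) = S\, M_n(a_1,\ldots,a_n)^{T}\, S^{-1}$.

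With this identity in hand, suppose $(a_1,\ldots,a_n)$ solves $(E_S)$, so $M_n(a_1,\ldots,a_n) = \epsilon S$ for some $\epsilon\in\{-1,1\}$. Then
\[
M_n(a_n,\ldots,a_1) = S\,(\epsilon S)^{T}\,S^{-1} = \epsilon\, S\, S^{T}\, S^{-1}.
\]
Since $S^{T} = -S = S^{-1}$, this equals $\epsilon\, S\, S^{-1}\, S^{-1} = \epsilon S^{-1} = -\epsilon S$, which is $\pm S$; so $(a_n,\ldots,a_1)$ solves $(E_S)$. The converse is identical (reversal is an involution), which gives the equivalence. Alternatively, and perhaps more in the spirit of the preceding lemmas, one can argue entirely inside $(E_{Id})$: from $M_n(a_1,\ldots,a_n)=\epsilon S$ one gets $M_{n+1}(a_1,\ldots,a_n,0) = -\epsilon\,Id$ (as in Lemma~\ref{54}), then observe that transposing this matrix identity reverses the word, i.e. $M_{n+1}(0,a_n,\ldots,a_1)^{T}$-type manipulation, and use that the solutions of $(E_{Id})$ are stable under reversal — a fact that itself follows from the transpose identity above applied with $M=Id$ (one checks $Id = Id^{T}$ is central, so $M_n(a_n,\ldots,a_1) = S\,Id\,S^{-1} = Id$ when $M_n(a_1,\ldots,a_n)=Id$, and similarly for $-Id$).

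I expect the only real point requiring care to be the bookkeeping in the transpose identity: getting the interior cancellations of $S^{-1}S$ right and tracking the sign coming from $S^{T}=-S$, together with the placement of the padding $0$ if one uses the $(E_{Id})$ route. Everything else is formal. I would therefore present the short computation $M_n(a_1,\ldots,a_n)^{T} = S^{-1} M_n(a_n,\ldots,a_1) S$ as a one-line displayed lemma (or fold it into the proof), deduce the proposition in two lines, and note in passing that the same identity re-proves the reversal-invariance of $(E_{Id})$ used implicitly elsewhere.
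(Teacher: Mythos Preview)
Your overall strategy via the transpose anti-automorphism is sound, but the key identity you state is false: conjugation by $S$ does \emph{not} send $\begin{pmatrix} a & -1 \\ 1 & 0 \end{pmatrix}$ to its transpose. A direct check gives
\[
S\begin{pmatrix} a & -1 \\ 1 & 0 \end{pmatrix}S^{-1}
=\begin{pmatrix} 0 & -1 \\ 1 & a \end{pmatrix}
\neq
\begin{pmatrix} a & 1 \\ -1 & 0 \end{pmatrix}.
\]
The correct conjugating involution is $D=\begin{pmatrix} 1 & 0 \\ 0 & -1 \end{pmatrix}$: one has $\begin{pmatrix} a & -1 \\ 1 & 0 \end{pmatrix}^{T}=D\begin{pmatrix} a & -1 \\ 1 & 0 \end{pmatrix}D$ and hence, after the interior cancellations, $M_n(a_n,\ldots,a_1)=D\,M_n(a_1,\ldots,a_n)^{T}\,D$. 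Plugging in $M_n(a_1,\ldots,a_n)=\epsilon S$ and using $S^{T}=-S$ together with $DSD=-S$ yields $M_n(a_n,\ldots,a_1)=\epsilon S$ (so the sign flip you announced also disappears). With this repair your argument is complete.

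For comparison, the paper takes a closely related but distinct route: instead of the transpose it uses the \emph{inverse} anti-automorphism, via $K=\begin{pmatrix} 0 & 1 \\ 1 & 0 \end{pmatrix}$ and the identity $\begin{pmatrix} a & -1 \\ 1 & 0 \end{pmatrix}^{-1}=K\begin{pmatrix} a & -1 \\ 1 & 0 \end{pmatrix}K$, obtaining $M_n(a_n,\ldots,a_1)=K\,M_n(a_1,\ldots,a_n)^{-1}K$ and concluding from $KSK=-S$. Both proofs are two-line computations once the right conjugating involution is in hand; your (corrected) transpose version has the small bonus that reversal-invariance of $(E_{Id})$ drops out immediately from $(\pm Id)^{T}=\pm Id$, while the paper's version uses exactly the matrix $K$ already highlighted in \cite{CH}, Remark~2.6, which it cites. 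Your detour through $(E_{Id})$ via $M_{n+1}(a_1,\ldots,a_n,0)$ is unnecessary and best dropped.
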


\begin{proof} La preuve suivante est une adaptation de la remarque 2.6 de \cite{CH}.
\\
\\On pose $K=\begin{pmatrix}
    0 & 1 \\
    1    & 0 
    \end{pmatrix}$. Supposons $(a_{1},\ldots,a_{n})$ solution de $(E_{S})$. Il existe $\epsilon$ dans $\{\pm 1\}$ tel que \[M_{n}(a_{1},\ldots,a_{n})=\epsilon S.\] De plus,  
		\[\begin{pmatrix}
   a & -1 \\
   1 &  0 
    \end{pmatrix}^{-1}=K\begin{pmatrix}
    a & -1 \\
    1 &  0 
    \end{pmatrix}K~{\rm et}~K^{2}=Id.\] Donc,
		
\begin{eqnarray*}
M_{n}(a_{n},\ldots,a_{1}) &=& (K\begin{pmatrix}
    a_{1} & -1 \\
      1   &  0 
    \end{pmatrix}^{-1}K)\ldots(K\begin{pmatrix}
    a_{n} & -1 \\
      1   & 0 
    \end{pmatrix}^{-1}K)\\ 
		                   &=& K(\begin{pmatrix}
    a_{n} & -1 \\
     1    & 0 
    \end{pmatrix}\ldots\begin{pmatrix}
    a_{1} & -1 \\
     1    & 0 
    \end{pmatrix})^{-1}K\\
		              &=& K M_{n}(a_{1},\ldots,a_{n})^{-1}K\\
									&=& -\epsilon K S K~{\rm car}~(a_{1},\ldots,a_{n})~{\rm solution~de}~(E_{S}).\\
									&=& \epsilon S~{\rm car}~KS=-SK.\\								
\end{eqnarray*}

\noindent Donc, $(a_{n},\ldots,a_{1})$ est solution de $(E_{S})$. 
\\
\\Si $(a_{n},\ldots,a_{1})$ est solution de $(E_{S})$ alors par ce qui précède $(a_{1},\ldots,a_{n})$ est solution de $(E_{S})$.

\end{proof}	

\begin{prop}
\label{55}

Tout $n$-uplet d'entiers strictement positifs solution de $(E_{S}^{1})$ (resp. $(E_{S}^{2})$) peut être obtenu en appliquant des opérations (a) et un nombre impair d'opérations (b) (resp. pair) à (1,1,2,1,1).

\end{prop}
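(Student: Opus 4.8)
The plan is to argue by induction on $n$, mirroring Ovsienko's reduction for $(E_{Id})$. For $n\le 4$ the statement is vacuous, and for $n=5$ it follows from Lemma~\ref{53}: $(E_S^1)$ has no solution, while the only solution of $(E_S^2)$ is $(1,1,2,1,1)$, obtained from $(1,1,2,1,1)$ by performing zero operations, hence an even number of operations~(b). Since operation~(a) leaves $M_n(a_1,\ldots,a_n)$ unchanged and operation~(b) sends it to its opposite, so do their inverses; it will therefore suffice to prove that every solution of $(E_S)$ of length $n\ge 6$ can be obtained from a \emph{shorter} solution of $(E_S)$ (still of length $\ge 5$) by a single direct operation. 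Indeed, applying the induction hypothesis to that shorter solution and then performing the operation reconstructs $(a_1,\ldots,a_n)$, and the parity of the number of operations~(b) is automatically correct, since operation~(a) preserves the sign in $\pm S$ together with that parity, while operation~(b) exchanges $(E_S^1)$ and $(E_S^2)$ and flips the parity.

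For the inductive step, fix $n\ge 6$ and a solution $(a_1,\ldots,a_n)$ of $(E_S)$; by Lemma~\ref{54} there is $i$ with $2\le i\le n-1$ and $a_i=1$. First case: some such $i$ satisfies $a_{i-1}\ge 2$ and $a_{i+1}\ge 2$. Deleting $a_i$ and decreasing $a_{i-1}$ and $a_{i+1}$ by $1$ (the inverse of operation~(a)) produces a positive $(n-1)$-tuple solving the same one of the equations $(E_S^1),(E_S^2)$, and $n-1\ge 5$, so the induction hypothesis applies. Second case: every $1$ occupying a position between $2$ and $n-1$ has a neighbour equal to $1$, so $(a_1,\ldots,a_n)$ contains two consecutive $1$'s. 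If two such occur in positions $(j,j+1)$ with $2\le j\le n-2$, replacing the four entries $a_{j-1},1,1,a_{j+2}$ by the single entry $a_{j-1}+a_{j+2}-1$ (the inverse of operation~(b)) produces a positive $(n-3)$-tuple solving $(E_S)$; since Lemma~\ref{53} forbids solutions of length $\le 4$, this already forces $n\ge 8$, so the $(n-3)$-tuple has length $\ge 5$ and the induction hypothesis applies.

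The remaining case — which I expect to be the main difficulty — is that every pair of consecutive $1$'s lies at an end, i.e.\ in positions $(1,2)$ or $(n-1,n)$. A short inspection of the forced entries shows that $(a_1,\ldots,a_n)$ must then have one of the forms $(1,1,a_3,\ldots,a_{n-1},a_n)$ with $a_3,\ldots,a_{n-1}\ge 2$, or $(a_1,\ldots,a_{n-2},1,1)$ with $a_2,\ldots,a_{n-2}\ge 2$, or $(1,1,a_3,\ldots,a_{n-2},1,1)$ with $a_3,\ldots,a_{n-2}\ge 2$; I claim none of these is a solution, which finishes the induction (and in particular makes the second case of the previous paragraph vacuous when $n\in\{6,7\}$). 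As in the proof of Lemma~\ref{54}, such a solution would give a solution $d=(a_2,\ldots,a_{n-1},a_n+a_1)$ of $(E_{Id})$ of length $n-1\ge 5$, hence by Theorem~\ref{34} the quiddity of a 3$d$-dissection of an $(n-1)$-gon. Now any 3$d$-dissection of a polygon with $\ge 4$ vertices has at least two vertices of quiddity $1$: either it is a single subpolygon, which then has $\ge 6$ vertices and quiddity $(1,\ldots,1)$, or its dual tree has at least two leaves, and each leaf subpolygon contains a vertex — of quiddity $1$ — lying in no other subpolygon, these being pairwise distinct. In the first two forms $d$ has exactly one entry equal to $1$, a contradiction. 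In the third, $d=(1,a_3,\ldots,a_{n-2},1,2)$ has exactly two vertices $u,w$ of quiddity $1$, at cyclic distance $2$, with a vertex $z$ of quiddity $2$ between them. Writing $P_u$, $P_w$ for the unique subpolygons containing $u$ and $w$, both edges of the ambient polygon at $u$ (resp.\ $w$) are sides of $P_u$ (resp.\ $P_w$), so $z\in P_u\cap P_w$; since $z$ has quiddity $2$, one gets $P_u\neq P_w$ and that $P_u$ and $P_w$ meet exactly along a single diagonal issuing from $z$. Following the vertices forced onto $P_u$ and $P_w$ then yields a contradiction in all cases: if $P_u$ and $P_w$ are both triangles they share the diagonal through $z$, forcing the ambient polygon to be a quadrilateral; and if, say, $P_w$ has $\ge 6$ vertices, then either all of its sides not yet identified are edges of the ambient polygon, which forces several further vertices of quiddity $1$, or one of them is a diagonal, which creates a third subpolygon, hence a leaf of the dual tree other than $P_u$ and $P_w$ (the latter now sharing two diagonals), and so a third vertex of quiddity $1$ — impossible, since $d$ has exactly two. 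This rules out the remaining case and completes the proof.
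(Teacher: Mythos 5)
Your proposal is correct, and its skeleton is the same as the paper's: find an interior $1$ via Lemma~\ref{54}, peel it off by the inverse of operation (a) or (b) (which preserves membership in $\{(E_S^1),(E_S^2)\}$ up to the sign flip that takes care of the parity count), and induct down to the base case $(1,1,2,1,1)$ furnished by Lemma~\ref{53}. Where you genuinely diverge is in the terminal configurations, i.e.\ the sequences that admit no reduction: you classify them as the three ``end patterns'' $(1,1,a_3,\ldots)$, $(\ldots,1,1)$, $(1,1,\ldots,1,1)$ with all interior entries $\geq 2$, and you kill them combinatorially, passing to the reduced $(E_{Id})$-solution $d=(a_2,\ldots,a_{n-1},a_n+a_1)$, invoking Theorem~\ref{34}, and counting quiddity-$1$ vertices of a $3d$-dissection (each leaf of the dual tree carries a private vertex of quiddity $1$; the fan of subpolygons around the quiddity-$2$ vertex $z$ forces the two triangle-leaves to share the diagonal from $z$, collapsing the polygon). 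The paper instead disposes of exactly these configurations in one stroke with the Morier-Genoud--Ovsienko minimality results (Theorems~\ref{51} and~\ref{52}): entries $\geq 2$ away from the extremities means the word is the minimal presentation of $\overline{S}$, hence equals $(1,1,2,1,1)$, contradicting $n\geq 6$. So your route is more self-contained (it only needs Theorem~\ref{34}, which the $(E_S)$ story uses anyway) but requires the dual-tree/leaf-counting analysis, which in your write-up is compressed — the claims that $P_u\neq P_w$, that $P_u,P_w$ share a diagonal issuing from $z$, and that a non-leaf $P_w$ forces a third leaf disjoint from $\{P_u,P_w\}$ are all correct but deserve the explicit fan-at-$z$ and leaf-privacy justifications sketched above; the paper's route is shorter at the cost of importing the uniqueness of minimal presentations from \cite{MO}.
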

 
\begin{proof}

Soit $(a_{1},\ldots,a_{n})$ un $n$-uplet d'entiers strictement positifs solution de $(E_{S})$. D'après la proposition précédente, il existe $j$ dans $\llbracket 2~;~ n-1 \rrbracket$ tel que $a_{j}=1$. Montrons que l'on peut toujours voir $(a_{1},\ldots,a_{n})$ comme l'image par l'opération (a) ou (b) d'un $k$-uplet d'entiers strictement positifs (qui sera toujours solution de $(E_{S})$) sauf si $(a_{1},\ldots,a_{n})=(1,1,2,1,1)$.
\\
\\ A) Si $j \in \llbracket 3~;~ n-2 \rrbracket$ alors $n \geq 6$. Si $a_{j-1}>1$ et $a_{j+1}>1$ alors $(a_{1},\ldots,a_{n})$ est l'image par l'opération (a) du $(n-1)$-uplet $(a_{1},\ldots,a_{j-1}-1,a_{j+1}-1,\ldots,a_{n})$. Si $a_{j-1}=1$ alors $(a_{1},\ldots,a_{n})$ est l'image par l'opération (b) du $(n-3)$-uplet $(a_{1},\ldots,a_{j-2}+a_{j+1}-1,\ldots,a_{n})$. De même si $a_{j+1}=1$.
\\
\\ B) Si pour tout $i$ appartenant à $\llbracket 3~;~ n-2 \rrbracket$ $a_{i}>1$. On a alors $a_{2}=1$ ou $a_{n-1}=1$. On suppose que $a_{2}=1$ alors:
\\ 
\\Soit $a_{1}>1$ et dans ce cas $(a_{1},\ldots,a_{n})$ est l'image par l'opération (a) de $(a_{1}-1,a_{3}-1,\ldots,a_{n})$.
\\
\\ Soit $a_{1}=1$ et dans ce cas on regarde $a_{n-1}$. Si $a_{n-1}>1$ alors, d'après le théorème \ref{52}, $(a_{1},\ldots,a_{n})$ est la présentation minimale de $S$ vu comme élément de $PSL_{2}(\mathbb{Z})$ c'est-à-dire $(a_{1},\ldots,a_{n})=(1,1,2,1,1)$. Si $a_{n-1}=1$ alors soit $a_{n}>1$ et dans ce cas $(a_{1},\ldots,a_{n})$ est l'image par l'opération (a) du $(n-1)$-uplet $(a_{1},\ldots,a_{n-2}-1,a_{n}-1)$, soit $a_{n}=1$ et donc, d'après le théorème \ref{52}, $(a_{1},\ldots,a_{n})$ est la présentation minimale de $S$ vu comme élément de $PSL_{2}(\mathbb{Z})$ c'est-à-dire $(a_{1},\ldots,a_{n})=(1,1,2,1,1)$.
\\
\\La démonstration est la même dans le cas $a_{n-1}=1$.

\end{proof}

\subsection{Démonstration du théorème \ref{23}}
 On a \[M_{n}(a_{1},\ldots,a_{n})=S~({\rm resp.}~-S) \Longleftrightarrow M_{n-1}(a_{1}+a_{n},\ldots,a_{n-1})=Id~({\rm resp.}~-Id).\]

i) Si $(a_{1},\ldots,a_{n})$ est solution de $(E_{S})$ alors $M_{n-1}(a_{1}+a_{n},\ldots,a_{n-1})=\pm Id$. Par le théorème \ref{34}, $(a_{1}+a_{n},\ldots,a_{n-1})$ est la quiddité associée à une 3$d$-dissection d'un polygone convexe $R$ à $n-1$ sommets.
\\
\\ Il existe un $j$ tel que le sommet $1$ soit relié au sommet $j$ et tel que le sous-polygone formé par les sommets $1,\ldots, j$ contienne exactement $a_{1}$ sous-polygones utilisant le sommet $1$. On note $R_{1}$ le sous-polygone formé par les sommets $1,\ldots,j$ et $R_{2}$ le sous-polygone formé par les sommets $j,\ldots,n-1,1$ qui contient lui $a_{n}$ sous-polygones utilisant le sommet 1. $R_{1}$ et $R_{2}$ sont eux même potentiellement décomposés en sous-polygones. Si on insère un quadrilatère entre $R_{1}$ et $R_{2}$ on obtient une 3$d$-dissection échancrée dont la quiddité est $(a_{1},\ldots,a_{n})$.   
\\
\\ \indent ii) Soient $n \geq 5$ et $(a_{1},\ldots,a_{n})$ la quiddité d'une 3$d$-dissection échancrée d'un polygone convexe $P$ à $n+1$ sommets.
\\
\\La décomposition de $P$ contient un unique quadrilatère dont les sommets sont numérotés $0$, $1$, $n$ et un dernier sommet numéroté $j$ avec $j \in \llbracket 3~;~ n-2 \rrbracket$. Ce quadrilatère permet de définir deux sous-polygones: le polygone utilisant les sommets $1, 2, \ldots, j$ que l'on notera $P_{1}$ et le polygone utilisant les sommets $j, j+1, \ldots, n$ que l'on notera $P_{2}$. $P_{1}$ et $P_{2}$ peuvent eux-même être décomposés en sous-polygones. Notons que $P_{1}$ et $P_{2}$ n'ont que le sommet $j$ en commun. On supprime le quadrilatère entre $P_{1}$ et $P_{2}$ et on recolle ces deux polygones en identifiant la diagonale reliant 1 à $j$ et celle reliant $j$ à $n$. On obtient une 3$d$-dissection dont la quiddité est $(a_{1}+a_{n},a_{2},\ldots,a_{n-1})$ (car les sous-polygones qui utilisaient le sommet $n$ dans la décomposition de $P$ utilisent maintenant le sommet $1$). Par le théorème \ref{34}, on a $M_{n-1}(a_{1}+a_{n},\ldots,a_{n-1})=\pm Id$  et donc $M_{n}(a_{1},\ldots,a_{n})=\pm S$. 
\\
\\Donc, $(a_{1},\ldots,a_{n})$ est solution de $(E_{S})$. 

\qed

\section{Résolution de $(E_{T})$}
\label{e}

Le but de cette section est de rechercher les $n$-uplets d'entiers strictement positifs solutions de $(E_{T})$ et de démontrer le théorème \ref{25}. Par la proposition \ref{21}, les solutions de cette équation ne sont pas invariantes par permutations circulaires. De plus, la proposition \ref{541} ne se généralise pas pour $(E_{T})$. Par exemple, on a $-T=M_{3}(1,1,2)$ et $M_{3}(2,1,1)=\begin{pmatrix}
     -1 & 0 \\
      1   & -1 
    \end{pmatrix} \neq \pm T$.

\subsection{Construction récursive des solutions}

On considère les deux équations suivantes: \begin{equation}
\tag{$E_{T}^{1}$}
M_{n}(a_1,\ldots,a_n)=T,
\end{equation} et \begin{equation}
\tag{$E_{T}^{2}$}
M_{n}(a_1,\ldots,a_n)=-T.
\end{equation} L'opération (a) conserve les solutions des équations $(E_{T}^{1})$ et $(E_{T}^{2})$. L'opération (b) échange les solutions des équations $(E_{T}^{1})$ et $(E_{T}^{2})$. On commence par l'étude de ces deux équations pour les petites valeurs de $n$.

\begin{lem}
\label{61}

i)Si $n \in \{1,2\}$ les équations $(E_{T}^{1})$ et $(E_{T}^{2})$ n'ont pas de solution. 
\\ii)Si $n=3$, $(E_{T}^{1})$ n'a pas de solution et $(E_{T}^{2})$ a une unique solution (1,1,2).
\\iii)Si $n=4$, $(E_{T}^{1})$ n'a pas de solution et $(E_{T}^{2})$ a pour solution (1,2,1,3) et (2,1,2,2). 

\end{lem}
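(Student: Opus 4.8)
Le plan est de combiner l'unicité de la présentation minimale dans $PSL_{2}(\mathbb{Z})$ (théorèmes \ref{51} et \ref{52}) avec un court calcul explicite de produits de matrices $2\times 2$.

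Pour i) et ii), je partirais de l'identité $T=-M_{3}(1,1,2)$ rappelée dans l'introduction, d'où $\overline{T}=\overline{M_{3}(1,1,2)}$ dans $PSL_{2}(\mathbb{Z})$. Dans la séquence $(1,1,2)$ tous les coefficients valent au moins $2$ hormis les deux premiers, donc, par le théorème \ref{52}, $\overline{M_{3}(1,1,2)}$ est la présentation minimale de $\overline{T}$; en particulier la longueur minimale d'une présentation de $\overline{T}$ est $3$. Le théorème \ref{51} interdit alors toute écriture $\overline{T}=\overline{M_{k}(c_{1},\ldots,c_{k})}$ à coefficients $c_{i}\geq 1$ avec $k<3$, et comme une solution de longueur $k$ de $(E_{T}^{1})$ ou $(E_{T}^{2})$ en fournirait une, on obtient i). Pour $k=3$, les mêmes théorèmes donnent que $(1,1,2)$ est l'unique solution à coefficients strictement positifs de $M_{3}(a_{1},a_{2},a_{3})=\pm T$; comme $M_{3}(1,1,2)=-T\neq T$, c'est une solution de $(E_{T}^{2})$ mais pas de $(E_{T}^{1})$, d'où ii). (On peut aussi traiter directement $n\in\{1,2,3\}$ en calculant la ligne du bas de $M_{n}(a_{1},\ldots,a_{n})$, dont le premier coefficient, égal respectivement à $1$, $a_{1}$ et $a_{1}a_{2}-1$, doit être nul.)

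Pour iii), je calculerais $M_{4}(a_{1},a_{2},a_{3},a_{4})$ à partir de $M_{3}(a_{1},a_{2},a_{3})$: sa ligne du bas est $\big(a_{3}(a_{1}a_{2}-1)-a_{1},\ 1-a_{2}a_{3}\big)$ et sa ligne du haut est $\big(a_{4}(a_{3}(a_{1}a_{2}-1)-a_{1})-(a_{1}a_{2}-1),\ a_{4}(1-a_{2}a_{3})+a_{2}\big)$. Posant $M_{4}(a_{1},a_{2},a_{3},a_{4})=\epsilon T$ avec $\epsilon\in\{\pm 1\}$, le coefficient en bas à droite impose $a_{2}a_{3}=1-\epsilon$; comme $a_{2},a_{3}\geq 1$ cela force $\epsilon=-1$ — donc $(E_{T}^{1})$ n'a pas de solution de longueur $4$ — et $a_{2}a_{3}=2$, soit $(a_{2},a_{3})\in\{(1,2),(2,1)\}$. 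Le coefficient en bas à gauche s'écrit alors $a_{3}(a_{1}a_{2}-1)=a_{1}$, d'où $a_{1}=2$ dans le premier cas et $a_{1}=1$ dans le second; le coefficient en haut à gauche se réduit à $1-a_{1}a_{2}$ et vaut bien $-1$, tandis que celui en haut à droite vaut $a_{2}-a_{4}$ et donne $a_{4}=2$ puis $a_{4}=3$. On obtient ainsi exactement $(2,1,2,2)$ et $(1,2,1,3)$, et un calcul direct ($M_{4}(2,1,2,2)=M_{4}(1,2,1,3)=-T$) confirme que ce sont des solutions de $(E_{T}^{2})$.

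L'unique point délicat est d'ordre calculatoire: assurer l'exactitude des produits matriciels et n'oublier aucun cas dans la discussion en longueur $4$ — mais tout y est élémentaire une fois la dernière ligne de $M_{4}$ explicitée. On peut de plus observer, à titre de vérification, que $(2,1,2,2)$ et $(1,2,1,3)$ s'obtiennent à partir de $(1,1,2)$ par une opération (a) (prise en $i=1$ et en $i=2$ respectivement) et qu'aucune solution de longueur $4$ ne peut provenir d'une opération (b), faute de solution de longueur $1$; je garderais néanmoins le calcul direct comme argument autonome, la réduction générale n'étant établie que plus loin.
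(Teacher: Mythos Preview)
Your proof is correct and follows essentially the same route as the paper: for $n\leq 3$ you invoke the minimal-presentation theorems \ref{51} and \ref{52} applied to $\overline{T}=\overline{M_{3}(1,1,2)}$, and for $n=4$ you compute $M_{4}(a_{1},a_{2},a_{3},a_{4})$ explicitly and read off the constraints entry by entry. The paper does exactly this, only phrasing the $n=4$ analysis more tersely (from $bc=2$ it deduces $a=c$ and $d=b+1$ in one breath), while you spell out each coefficient separately; the extra remarks you add (the alternative bottom-row argument for small $n$ and the link with operation~(a)) are sound but not needed.
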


\begin{proof}

On a $M_{3}(1,1,2)=-T$. Donc, par le théorème \ref{52}, la présentation minimale de $T$ vu comme élément de $PSL_{2}(\mathbb{Z})$ est $\overline{T}=\overline{M(1,1,2)}$. Donc, d'après le théorème \ref{51}, $(E_{T})$ n'a pas de solution si $n \in \{1,2\}$ et la seule solution de $(E_{T})$ pour $n=3$ est (1,1,2).
\\
\\ Soit $(a,b,c,d)$ une solution de $(E_{T})$. On a $M_{4}(a,b,c,d)=\begin{pmatrix}
    abcd-ad-ab-cd+1 & -bcd+b+d \\
    abc-a-c         & 1-bc 
    \end{pmatrix}.$ On a nécessairement $bc=2$. On en déduit que $a=c$ et $d=b+1$. On a deux cas : $a=c=1$, $b=2$ et $d=3$ ou $a=c=2$, $b=1$ et $d=2$. On vérifie que ces deux 4-uplets sont des solutions de $(E_{T}^{2})$.

\end{proof}

On va maintenant essayer de trouver tous les $n$-uplets d'entiers strictement positifs solutions de $(E_{T})$. On donne ci-dessous deux lemmes utiles pour cette étude :
	
\begin{lem}
\label{62}

$(a_{1},\ldots,a_{n})$ est un $n$-uplet d'entiers strictement positifs solution de $(E_{T})$ si et seulement si $(a_{1},\ldots,a_{n}-1)$ est solution de $(E_{Id})$.

\end{lem}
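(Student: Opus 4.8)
The plan is to derive the equivalence from a single algebraic identity that peels one factor of $T$ off the left of $M_n(a_1,\ldots,a_n)$. First I would record the elementary identity
\[
\begin{pmatrix} a & -1 \\ 1 & 0 \end{pmatrix} \;=\; \begin{pmatrix} 1 & 1 \\ 0 & 1 \end{pmatrix}\begin{pmatrix} a-1 & -1 \\ 1 & 0 \end{pmatrix} \;=\; T\begin{pmatrix} a-1 & -1 \\ 1 & 0 \end{pmatrix},
\]
valid for every integer $a$ and verified by one matrix multiplication. Applying it to the leftmost factor in the product defining $M_n(a_1,\ldots,a_n)$ (the one carrying $a_n$), and leaving the other $n-1$ factors untouched, yields
\[
M_n(a_1,\ldots,a_n) \;=\; T\,M_n(a_1,\ldots,a_{n-1},a_n-1).
\]

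From here the lemma is immediate in both directions, since $T$ is invertible: for $\varepsilon\in\{\pm1\}$ one has $M_n(a_1,\ldots,a_n)=\varepsilon T$ if and only if $M_n(a_1,\ldots,a_{n-1},a_n-1)=\varepsilon\,Id$. Hence $(a_1,\ldots,a_n)$ is a solution of $(E_T)$ exactly when $(a_1,\ldots,a_{n-1},a_n-1)$ is a solution of $(E_{Id})$, which is the claimed statement.

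The only delicate point — hardly an obstacle — is the degenerate case $a_n=1$, where the reduced sequence ends in a $0$ rather than a strictly positive integer. As in the proof of Lemma \ref{54}, the equation $M_n(a_1,\ldots,a_{n-1},0)=\pm Id$ is still perfectly meaningful, and since the extra left factor is then $S=\bigl(\begin{smallmatrix}0&-1\\1&0\end{smallmatrix}\bigr)$ it reads $S\,M_{n-1}(a_1,\ldots,a_{n-1})=\pm Id$, i.e. $(a_1,\ldots,a_{n-1})$ is a solution of $(E_S)$; this is consistent with Theorems \ref{23} and \ref{25}. I would simply flag this when stating the conclusion, so that the phrase ``$(a_1,\ldots,a_n-1)$ est solution de $(E_{Id})$'' is understood as the corresponding identity $M_n(a_1,\ldots,a_{n-1},a_n-1)=\pm Id$. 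Apart from that bookkeeping, the entire proof rests on the one displayed matrix identity above.
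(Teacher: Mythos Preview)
Your proof is correct and is essentially identical to the paper's: both rest on the identity $T^{-1}M_n(a_1,\ldots,a_n)=M_n(a_1,\ldots,a_{n-1},a_n-1)$, which is exactly your displayed factorization read from the other side. The paper presents this as a three-line chain of equivalences without isolating the matrix identity or commenting on the $a_n=1$ case (that situation is handled separately in the next lemma), but the underlying argument is the same.
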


\begin{proof}

\begin{eqnarray*}
(a_{1},\ldots,a_{n})~{\rm est~solution~de}~(E_{T}). & \Longleftrightarrow & {\rm Il~existe}~\epsilon~{\rm dans}~\{-1,1\}~{\rm tel~que}~M_{n}(a_{1},\ldots,a_{n})=\epsilon T. \\ 
                                                    & \Longleftrightarrow & {\rm Il~existe}~\epsilon~{\rm dans}~\{-1,1\}~{\rm tel~que}~ T^{-1}M_{n}(a_{1},\ldots,a_{n})=\epsilon Id. \\
                                                    & \Longleftrightarrow & {\rm Il~existe}~\epsilon~{\rm dans}~\{-1,1\}~{\rm tel~que}~ M_{n}(a_{1},\ldots,a_{n}-1)=\epsilon Id. \\
\end{eqnarray*}

\end{proof}

\begin{lem}
\label{63}

$(a_{1},\ldots,a_{n-1},1)$ est un $n$-uplet d'entiers strictement positifs solution de $(E_{T})$ si et seulement si $(a_{1},\ldots,a_{n-1})$ est solution de $(E_{S})$.

\end{lem}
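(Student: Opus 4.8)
Le plan est de ramener l'équivalence à un court calcul matriciel dans $SL_{2}(\mathbb{Z})$. Par définition de $M_{n}$, le facteur le plus à gauche de $M_{n}(a_{1},\ldots,a_{n-1},1)$ est $\begin{pmatrix} 1 & -1 \\ 1 & 0 \end{pmatrix}$, de sorte que l'on a
\[ M_{n}(a_{1},\ldots,a_{n-1},1) = \begin{pmatrix} 1 & -1 \\ 1 & 0 \end{pmatrix} M_{n-1}(a_{1},\ldots,a_{n-1}). \]
La première étape consiste à observer que $\begin{pmatrix} 1 & -1 \\ 1 & 0 \end{pmatrix} = TS$, ce qui résulte d'un produit immédiat de matrices $2\times 2$. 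On a ainsi $M_{n}(a_{1},\ldots,a_{n-1},1) = TS\, M_{n-1}(a_{1},\ldots,a_{n-1})$.

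On déroule ensuite la chaîne d'équivalences suivante, pour $\epsilon$ dans $\{-1,1\}$ : l'égalité $M_{n}(a_{1},\ldots,a_{n-1},1) = \epsilon T$ équivaut, après multiplication à gauche par $T^{-1}$, à $S\, M_{n-1}(a_{1},\ldots,a_{n-1}) = \epsilon\, Id$, puis, après multiplication à gauche par $S^{-1}$, à $M_{n-1}(a_{1},\ldots,a_{n-1}) = \epsilon\, S^{-1}$. Comme $S^{2} = -Id$, on a $S^{-1} = -S$, et cette condition s'écrit donc $M_{n-1}(a_{1},\ldots,a_{n-1}) = -\epsilon\, S$. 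En faisant varier $\epsilon$ dans $\{-1,1\}$, on obtient que $(a_{1},\ldots,a_{n-1},1)$ est solution de $(E_{T})$ si et seulement si $M_{n-1}(a_{1},\ldots,a_{n-1})$ appartient à $\{S,-S\}$, c'est-à-dire si et seulement si $(a_{1},\ldots,a_{n-1})$ est solution de $(E_{S})$, ce qui est l'énoncé voulu.

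Il n'y a pas de véritable obstacle ici ; le seul point demandant de l'attention est la gestion des signes, à savoir l'emploi de $S^{-1} = -S$ (et non $S^{-1} = S$) au moment d'inverser $S$. On peut d'ailleurs éviter l'identité $\begin{pmatrix} 1 & -1 \\ 1 & 0 \end{pmatrix} = TS$ en passant par le lemme \ref{62} : $(a_{1},\ldots,a_{n-1},1)$ est solution de $(E_{T})$ si et seulement si $M_{n}(a_{1},\ldots,a_{n-1},0) = \pm Id$ ; le facteur le plus à gauche étant alors $\begin{pmatrix} 0 & -1 \\ 1 & 0 \end{pmatrix} = S$, ceci équivaut à $S\, M_{n-1}(a_{1},\ldots,a_{n-1}) = \pm Id$, soit $M_{n-1}(a_{1},\ldots,a_{n-1}) = \pm S$, d'où de nouveau l'équivalence avec $(E_{S})$.
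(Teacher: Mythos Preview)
Ta démonstration est correcte et suit essentiellement la même voie que celle de l'article : l'article multiplie d'abord à gauche par $T^{-1}$ pour obtenir $M_{n}(a_{1},\ldots,a_{n-1},0)=\pm Id$, puis factorise le $S$ à gauche, ce qui n'est qu'une réorganisation de ton calcul $M_{n}(a_{1},\ldots,a_{n-1},1)=TS\,M_{n-1}(a_{1},\ldots,a_{n-1})$. Ton paragraphe final, passant par le lemme~\ref{62} et $M_{n}(a_{1},\ldots,a_{n-1},0)$, reproduit d'ailleurs exactement l'enchaînement choisi par l'article.
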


\begin{proof}

\begin{eqnarray*}
(a_{1},\ldots,a_{n-1},1)~{\rm solution~de}~(E_{T}). & \Longleftrightarrow & {\rm Il~existe}~\epsilon~{\rm dans}~\{\pm 1\}~{\rm tel~que}~M_{n}(a_{1},\ldots,a_{n-1},1)=\epsilon T. \\ 
                                                    & \Longleftrightarrow & {\rm Il~existe}~\epsilon~{\rm dans}~\{\pm 1\}~{\rm tel~que}~ T^{-1}M_{n}(a_{1},\ldots,a_{n-1},1)=\epsilon Id. \\
                                                    & \Longleftrightarrow & {\rm Il~existe}~\epsilon~{\rm dans}~\{\pm 1\}~{\rm tel~que}~ M_{n}(a_{1},\ldots,a_{n-1},0)=\epsilon Id. \\
																									  & \Longleftrightarrow & {\rm Il~existe}~\epsilon~{\rm dans}~\{\pm 1\}~{\rm tel~que}~ M_{n}(a_{1},\ldots,a_{n-1})=-\epsilon S. \\
\end{eqnarray*}

\end{proof}

On peut maintenant démontrer le résultat suivant :

\begin{prop}
\label{64}

Tout $n$-uplet d'entiers strictement positifs solution de $(E_{T}^{1})$ (resp. $(E_{T}^{2})$) peut être obtenu en appliquant des opérations (a) et un nombre impair d'opérations (b) (resp. pair) à (1,1,2).

\end{prop}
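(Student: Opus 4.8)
The plan is to mimic the structure of the proof of Proposition \ref{55}, using this time the reference point $(1,1,2)$ (the unique minimal solution, by Lemma \ref{61}) and exploiting the two reductions to already-solved equations provided by Lemmas \ref{62} and \ref{63}. Concretely, given a solution $(a_{1},\ldots,a_{n})$ of $(E_{T})$, I want to show that unless $(a_{1},\ldots,a_{n})=(1,1,2)$, it is the image under operation (a) or (b) of a shorter tuple which is again a solution of $(E_{T})$; then an induction on $n$, together with the bookkeeping on the parity of the number of (b)'s (which toggles between $(E_{T}^{1})$ and $(E_{T}^{2})$, exactly as stated before the proposition), yields the claim. The base case $n=3$ is Lemma \ref{61}: $(1,1,2)$ is a solution of $(E_{T}^{2})$ and is reached from itself by zero operations.

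For the inductive step I would split on the value of $a_{n}$. If $a_{n}=1$, then by Lemma \ref{63} the tuple $(a_{1},\ldots,a_{n-1})$ is a solution of $(E_{S})$; by Proposition \ref{55} it is obtained from $(1,1,2,1,1)$ by operations (a) and (b), and since $(1,1,2,1,1,1)$ is obtained from $(1,1,2)$ by a single operation (a) (inserting a $1$ at the end, with the two neighbouring entries raised — here the $2$ becomes... one must check the indices carefully, but $(1,1,2)\xmapsto{(a)}(1,1,2+1,1)$ is wrong; rather one applies (a) at the periodic seam), I can lift the whole construction to $(E_{T})$. Here some care is needed: the cleanest route is to observe directly that appending a $1$ commutes appropriately, i.e. that if $(b_{1},\ldots,b_{m})$ is obtained from $(1,1,2,1,1)$ by a sequence of (a)'s and (b)'s then $(b_{1},\ldots,b_{m},1)$ is obtained from $(1,1,2,1,1,1)$ by the same sequence (the final $1$ being carried along untouched, since all the operations in Proposition \ref{55} act on indices in $\llbracket 2~;~n-1\rrbracket$ by Lemma \ref{54}), and that $(1,1,2,1,1,1)$ is itself obtained from $(1,1,2)$ by operations of type (a)/(b); one then checks the parity count matches. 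If instead $a_{n}\geq 2$, then by Lemma \ref{62} the tuple $(a_{1},\ldots,a_{n}-1)$ is a solution of $(E_{Id})$, so by Theorem \ref{33} it is obtained from $(1,1,1)$ by operations (a) and (b); again I lift this to a construction of $(a_{1},\ldots,a_{n})$ over $(E_{T})$ by tracking what happens to the last entry under each operation, and observe that $(1,1,1+1)=(1,1,2)$ is the base point, so nothing extra is needed.

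The main obstacle is the compatibility bookkeeping in the lifting step: operations (a) and (b) are defined on cyclic (periodic) sequences for $(E_{Id})$, but for $(E_{T})$ (and $(E_{S})$) the sequences are honestly linear — the seam between $a_{1}$ and $a_{n}$ is distinguished — so one must verify that every operation occurring in the $(E_{Id})$-construction from Theorem \ref{33} (resp. the $(E_{S})$-construction) can be performed at an interior position that does not disturb the rightmost entry, and that the terminal reductions $M_{n}(a_{1},\ldots,a_{n}-1)$ or $M_{n}(a_{1},\ldots,a_{n-1})$ track these operations faithfully. Lemma \ref{54} is the key tool guaranteeing an interior $1$ in the $(E_{S})$ case; for the $(E_{Id})$ case one uses Lemma \ref{35} together with the analysis already carried out in part A)/B) of the proof of Proposition \ref{55}, transported via Lemma \ref{62}. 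Once this is in place, the parity statement follows because operation (a) preserves the sign $\epsilon$ while operation (b) flips it (as recalled in Section \ref{b}), and $(1,1,2)$ lies in $(E_{T}^{2})$; hence a tuple in $(E_{T}^{1})$ requires an odd number of (b)'s and one in $(E_{T}^{2})$ an even number, completing the induction.
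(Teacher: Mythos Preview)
Your overall architecture --- split on whether $a_{n}=1$ or $a_{n}\geq 2$, invoke Lemmas \ref{63} and \ref{62} respectively, and reduce inductively to the base $(1,1,2)$ --- is exactly the paper's. The case $a_{n}=1$ is essentially correct and coincides with the paper: one small correction is that $(1,1,2,1,1,1)$ is obtained from $(1,1,2)$ by a single operation (b) at $i=3$ (splitting $2$ as $2+1-1$, i.e.\ $a'_{3}=2$, $a''_{3}=1$), not by an operation (a); once this is noted, your lifting argument goes through because the reductions in Proposition \ref{55} never use the cyclic seam (Lemma \ref{54} pins the relevant $1$ strictly in the interior), so appending a trailing $1$ commutes with every step.

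The genuine gap is in your treatment of $a_{n}\geq 2$. You propose to lift the full construction of $(a_{1},\ldots,a_{n}-1)$ from $(1,1,1)$ given by Theorem \ref{33}, but as you yourself note, Theorem \ref{33} treats sequences cyclically: nothing prevents an operation at the seam between positions $n$ and $1$, and such an operation does \emph{not} lift to a legitimate (linear) operation on the $(E_{T})$ side with the last entry shifted by $+1$. Your proposed fix --- ``verify every operation can be performed at an interior position'' --- is not a consequence of Lemma \ref{35} alone, and you do not supply the missing argument. The paper sidesteps this entirely: rather than lifting a whole chain of operations, it performs a \emph{single} reduction step directly on $(a_{1},\ldots,a_{n})$. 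Concretely, for $n\geq 5$ with $a_{n}\geq 2$: if some $a_{j}=1$ with $3\leq j\leq n-2$ one reduces exactly as in case A) of Proposition \ref{55}; otherwise, Lemma \ref{62} plus Theorem \ref{34} say that $(a_{1},\ldots,a_{n}-1)$ is the quiddity of a $3d$-dissection, and any such dissection has at least two exterior pieces (or is a single $3k$-gon), hence at least two entries equal to $1$. This forces $a_{j}=1$ for some $j\in\{1,2,n-1\}$, and a short case analysis (on $a_{n-1}$, then $a_{2}$ and $a_{1}$, using Theorem \ref{52} to rule out the residual cases) exhibits the required inverse of (a). The $n=4$ solutions are checked by hand. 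This is the step your proposal is missing.
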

 
\begin{proof}

Soient $n \geq 4$ et $(a_{1},\ldots,a_{n})$ un $n$-uplet d'entiers strictement positifs solution de $(E_{T})$. Montrons que l'on peut toujours voir $(a_{1},\ldots,a_{n})$ comme l'image par l'opération (a) ou (b) d'un $k$-uplet d'entiers strictement positifs (qui sera toujours solution de $(E_{T})$).
\\
\\Si $n=4$, les solutions de $(E_{T})$ sont (1,2,1,3) et (2,1,2,2) (voir lemme \ref{61}). Ces deux 4-uplets peuvent être obtenus en appliquant l'opération (a) à (1,1,2). On suppose maintenant $n \geq 5$. On distingue deux cas :
\\
\\A) Si $a_{n}=1$. D'après le lemme \ref{63}, $(a_{1},\ldots,a_{n-1})$ est solution de $(E_{S})$. D'après la proposition \ref{55}, $(a_{1},\ldots,a_{n-1})$ peut être obtenu en appliquant des opérations (a) et (b) à (1,1,2,1,1). On en déduit que $(a_{1},\ldots,a_{n-1},1)$ peut être obtenu en appliquant des opérations (a) et (b) à (1,1,2,1,1,1). Or, (1,1,2,1,1,1) est l'image par l'opération (b) de (1,1,2).
\\
\\B) Si $a_{n} \geq 2$. S'il existe  $j \in \llbracket 3~;~ n-2 \rrbracket$ tel que $a_{j}=1$ alors on peut procéder comme de la même façon que le cas A) de la proposition \ref{55}.
\\
\\Si pour tout $i$ dans $\llbracket 3~;~ n-2 \rrbracket$ $a_{i}>1$. Par le lemme \ref{62}, $(a_{1},\ldots,a_{n}-1)$ est solution de $(E_{Id})$. Par le théorème \ref{34}, $(a_{1},\ldots,a_{n}-1)$ est la quiddité d'une 3$d$-dissection d'un polygone convexe $P$ à $n$ sommets. Cette décomposition est constitué d'un seul polygone (et dans ce cas $n$ est un multiple de 3) ou bien possède au moins deux sous-polygones extérieurs. Dans les deux cas, $(a_{1},\ldots,a_{n}-1)$ contient au moins deux 1. En particulier, il existe $j$ dans $\{1,2,n-1\}$ tel que $a_{j}=1$. \\
\\Si $a_{n-1}=1$ alors $(a_{1},\ldots,a_{n})$ est l'image par l'opération (a) du $(n-1)$-uplet $(a_{1},\ldots,a_{n-2}-1,a_{n}-1)$. On suppose maintenant $a_{n-1}>1$. On a trois cas :

\begin{itemize}

\item Si $a_{2}=1$ et $a_{1} >1$ alors $(a_{1},\ldots,a_{n})$ est l'image par l'opération (a) du $(n-1)$-uplet $(a_{1}-1,a_{3}-1,a_{4},\ldots,a_{n})$. 

\item Si $a_{2}=1$ et $a_{1}=1$ alors, par le théorème \ref{52}, $(a_{1},\ldots,a_{n})$ est la présentation minimale de $T$ et donc $n=3$. Ceci est impossible puisque $n \geq 5$.

\item Si $a_{2}>1$ alors, par le théorème \ref{52}, $(a_{1},\ldots,a_{n})$ est la présentation minimale de $T$ et donc $n=3$. Ceci est impossible puisque $n \geq 5$.

\end{itemize}

\end{proof}

\begin{rem}

{\rm Ce théorème permet de voir que les solutions de $(E_{T})$ contiennent toujours un 1. En particulier, la proposition 4.1 n'a pas de réciproque.}

\end{rem}

\subsection{Démonstration du théorème \ref{25}}

i) Cela découle du lemme \ref{63} et du théorème \ref{23}. 
\\
\\ \noindent ii) Soit $(a_{1},\ldots,a_{n})$ une solution de $(E_{T})$ avec $a_{n} \geq 2$. Par le lemme \ref{62}, $(a_{1},\ldots,a_{n}-1)$ est un $n$-uplet d'entiers strictement positifs solution de $(E_{Id})$. Par le théorème \ref{34}, celui-ci est la quiddité d'une 3$d$-dissection d'un polygone convexe $P$ à $n$ sommets. On rajoute un quadrilatère coupé en un triangle de poids -1 et un triangle de poids 1 sur le côté reliant le sommet 1 de $P$ au sommet $n$ de $P$ de telle sorte que les deux triangles utilisent le sommet 1 de $P$. 

$$
\shorthandoff{; :!?}
\qquad
\xymatrix @!0 @R=0.40cm @C=0.50cm
{
&&
\\
&&
\\
&&\overline{a_{n-1}}\ldots\ar@{-}[lldd]&
\\
&&& 
\\
\overline{a_{n}-1}\ar@{-}[dddd]&& 
\\
&&&
\\
&&&& \longmapsto
\\
&&& 
\\
\overline{a_{1}} 
\\
&& 
\\
&&\overline{a_{2}}\ldots\ar@{-}[lluu]
}  
\xymatrix @!0 @R=0.40cm @C=0.50cm
{
&&&&&\overline{a_{n-1}}\ldots\ar@{-}[lldd]&
\\
&&&&&
\\
&&&\overline{a_{n}}\ar@{-}[ddddddd]&& 
\\
&&&&&
\\
\bullet\ar@{-}[ddd]\ar@{-}[rrruu] &&1
\\
&
\\
& 
\\
\bullet \ar@{-}[rrrdd] &-1
\\
&&&&&
\\
&&&a_{1}\ar@{-}[llluuuuu] 
\\
&&&&
\\
&&&&&a_{2}\ldots\ar@{-}[lluu]
}
$$
On obtient une 3$d$-dissection coiffée d'un polygone convexe à $n+2$ sommets de quiddités $(a_{1},\ldots,a_{n})$.
\\
\\ Soit $(a_{1},\ldots,a_{n})$ la quiddité d'une 3$d$-dissection coiffée d'un polygone convexe à $n+2$ sommets $Q$. En particulier, $a_{n} \geq 2$. On retire le triangle de poids -1 et le triangle de poids 1 adjacent à celui-ci. On obtient une 3$d$-dissection d'un polygone convexe à $n$ sommets de quiddité $(a_{1},\ldots,a_{n}-1)$. Par le théorème \ref{34}, $(a_{1},\ldots,a_{n}-1)$ est une solution de $(E_{Id})$. Donc, par le lemme \ref{62}, $(a_{1},\ldots,a_{n})$ est une solution de $(E_{T})$.

\qed

\noindent \textbf{Remerciement.} Ce travail a été financé par la région Grand Est et l'Université de Reims Champagne-Ardenne.

\end{document}